
\documentclass[letterpage, 11pt, notitlepage]{article}

\usepackage[margin=1.0in]{geometry}

\usepackage{amsmath, amsfonts, amssymb, graphicx,epstopdf,color,soul,mathtools,enumitem,amsthm} 
\usepackage{mathrsfs, mathtools,bbm, dsfont}
\usepackage{appendix}

\usepackage{ifthen}
\usepackage{multirow, multicol}
\usepackage{float}
\usepackage{subeqnarray, array}
\usepackage{enumitem}
\usepackage{xcolor}
\usepackage[font=small]{caption}
\usepackage{tikz}
\usepackage{times}

\usepackage[bookmarks=true,pageanchor,colorlinks,linkcolor=red,anchorcolor=blue, citecolor=blue,urlcolor=blue,hyperfootnotes=false]{hyperref}


\newcommand{\Eset}{\mathbb{E}}

\newcommand{\Pset}{\mathbb{P}}

\newcommand{\Rset}{\mathbb{R}}

\newcommand{\Kcal}{{\cal K}}

\newcommand{\Ocal}{{\cal O}}

\newcommand{\Qcal}{{\cal Q}}

\newcommand{\Abf}{{\bf A}}



\newcommand{\xhat}{{\hat{x}}}
\newcommand{\yhat}{{\hat{y}}}
\newcommand{\zhat}{{\hat{z}}}



\newtheorem{lem}{Lemma}
\newtheorem{thm}{Theorem}

\newtheorem{assump}{Assumption}
\newtheorem{remark}{Remark}

\title{Finite-Time Convergence Rates of Nonlinear Two-Time-Scale Stochastic Approximation under Markovian Noise}
\author{
Thinh T. Doan\thanks{Thinh T. Doan is with the Bradley Department of Electrical and Computer Engineering, Virginia Tech, USA. Email: {\tt\small thinhdoan@vt.edu}}
}

\date{}

\begin{document}


\maketitle

\begin{abstract}
We study the so-called two-time-scale stochastic approximation, a simulation-based approach for finding the roots of two coupled nonlinear operators.  Our focus is to characterize its finite-time performance in a Markov setting, which often arises in stochastic control and reinforcement learning problems. In particular, we consider the scenario where the data in the method are generated by Markov processes, therefore, they are dependent. Such dependent data result to biased observations of the underlying operators. Under some fairly standard assumptions on the operators and the Markov processes, we provide a formula that characterizes the convergence rate of the mean square errors generated by the method to zero. Our result shows that the method achieves a convergence in expectation at a rate $\Ocal(1/k^{2/3})$, where $k$ is the number of iterations. Our analysis is mainly motivated by the classic singular perturbation theory for studying the asymptotic convergence of two-time-scale systems, that is, we consider a Lyapunov function that carefully characterizes the coupling between the two iterates. In addition, we utilize the geometric mixing time of the underlying Markov process to handle the bias and dependence in the data. Our theoretical result complements for the existing literature, where the rate of nonlinear two-time-scale stochastic approximation under Markovian noise is unknown. 
\end{abstract}


\section{Nonlinear two-time-scale SA}\label{sec:nonlinear_SA}
Stochastic approximation ({SA}), introduced by \cite{RobbinsM1951}, is a simulation-based approach for finding the root (or fixed point) of some unknown operator $F$ represented by the form of an expectation, i.e., $F(x) = \Eset_{\pi}[F(x,\xi)]$, where $\xi$ is some random variable with a distribution $\pi$. Specifically, this method seeks a point $x^{\star}$ such that $F(x^{\star}) = 0$ based on the noisy observations $F(x;\xi)$. The iterate $x$ is iteratively updated by moving along the direction of $F(x;\xi)$ scaled by some step size. Through a careful choice of this step size, the ``noise'' induced by the random samples $\xi$ can be averaged out across iterations, and the algorithm converges to $x^*$.  {SA} has found broad applications in many areas including statistics, stochastic optimization, machine learning, and reinforcement learning \cite{borkar2008,SBbook2018,LanBook2020}.        

In this paper, we consider the two-time-scale {SA}, a generalized variant of the classic {SA}, which is used to find the root of a system of two coupled nonlinear equations. Given two unknown operators $F:\Rset^{d}\times\Rset^{d}\rightarrow\Rset^{d}$ and $G:\Rset^{d}\times\Rset^{d}\rightarrow\Rset^{d}$ represented by 
\begin{align*}
F(x,y) = \Eset_{\pi}[F(x,y;\xi)]\quad \text{and}\quad G(x,y) = \Eset_{\pi}[G(x,y;\xi)],
\end{align*}
we seek to find $x^{\star}$ and $y^{\star}$ such that
\begin{align}
\left\{
\begin{aligned}
&F(x^{\star},y^{\star}) = 0\\
&G(x^{\star},y^{\star}) = 0.
\end{aligned}\right.\label{prob:FG}
\end{align}
Since $F$ and $G$ are unknown, we assume that there is a stochastic oracle that outputs noisy values of $F(x,y)$ and $G(x,y)$ for a given pair $(x,y)$, i.e., we only have access to $F(x,y;\xi)$ and $G(x,y;\xi)$. Using this stochastic oracle, we study the two-time-scale nonlinear {SA} for solving problem \eqref{prob:FG}, which iteratively updates the iterates $x_{k}$ and $y_{k}$, the estimates of $x^{\star}$ and $y^{\star}$, respectively, for any $k\geq 0$ as
\begin{align}
\begin{aligned}
x_{k+1} &= x_{k} - \alpha_{k}F(x_{k},y_{k};\xi_{k})\\   
y_{k+1} &= y_{k} - \beta_{k}G(x_{k},y_{k};\xi_{k}),
\end{aligned}\label{alg:xy}
\end{align}
where $x_{0}$ and $y_{0}$ are arbitrarily initial conditions and $\{\xi_{k}\}$ is a sequence of random variables. We consider the case where $\{\xi_{k}\}$ is a Markov chain, whose stationary distribution is $\pi$. Thus,  ${\xi_{k}}$ are dependent and the observations are biased, i.e., 
\begin{align*}
\Eset_{\xi_{k}}[F(x;y;\xi_{k})] \neq F(x,y) \;\text{ and }\;    \Eset_{\xi_{k}}[G(x;y;\xi_{k})] \neq G(x,y).
\end{align*}

In \eqref{alg:xy}, $\alpha_{k}$ and $\beta_{k}$ are two nonnegative step sizes chosen such that $\beta_{k}\ll \alpha_{k}$, i.e., the second iterate is updated using step sizes that are very small as compared to the ones used to update the first iterate.  Thus, the update of $x_{k}$ is referred to as the ``fast-time scale" while the update of $y_{k}$ is called the ``slow-time scale". The time-scale difference here is loosely defined as the ratio between the two step sizes, i.e., $\beta_{k}/\alpha_{k}$. In addition, the update of the \textit{fast iterate} depends on the \textit{slow iterate} and vice versa, that is, they are coupled to each other. To handle this coupling, the two step sizes have to be properly chosen to guarantee the convergence of the method. Indeed, an important problem in this area is to select the two step sizes so that the two iterates converge as fast as possible. Our main focus is, therefore, to derive the finite-time convergence of \eqref{alg:xy} in solving \eqref{prob:FG} under some proper choice of these two step sizes and to understand their impact on the performance of the nonlinear two-time-scale {SA} under Markovian randomness caused by the Markov process $\{\xi_{k}\}$. 


\subsection{Motivating applications}\label{sec:motivating_applications}
Nonlinear two-time-scale {\sf SA}, Eq.\ \eqref{alg:xy}, has found numerous applications in many areas including  stochastic optimization  \cite{WangFL2017, ZhangX2019}, distributed control over cluster networks \cite{ThiemDN2020}, distributed optimization under communication constraints  \cite{DoanBS2017,DoanMR2018b},   and reinforcement learning \cite{BORKAR2005,KondaT2003}. The Markov setting we consider in this paper can be found in the applications where the generated data are dependent and evolve through time, for example, they are sampled from some dynamical systems. Notable examples include robust estimation \cite{POLJAK198053},  stochastic control/reinforcement learning \cite{BORKAR2005,KondaT2003}, Markov chain Monte Carlo methods \cite{Andrieu2003}, and (distributed) incremental stochastic optimization \cite{RamNV2009,JohanssonRJ2010}. We provide below two such motivating applications.

One concrete example is to model different variants of the well-known stochastic gradient descent (SGD) where the data is generated by a Markov process such as in robust estimation \cite{POLJAK198053}. In this problem, we assume that the data is generated by an autoregressive process, that is, the data points $\xi_{k} = (\xi_{k}^{1},\xi_{k}^{2})\in\Rset^{d}\times\Rset$ is generated as follows
\begin{align*}
\xi_{k}^{1} = \Abf\xi_{k-1}^{1} + e_{1}W_{k},\quad \xi_{k}^{2} = \langle x,\xi_{k}^{1}\rangle + V_{k}, 
\end{align*}
where $e_{1}$ is the first basis vector, and $W_{k}$ and $V_{k}$ are sampled i.i.d from the standard normal distribution $N(0,1)$. In addition, $\Abf\in\Rset^{d\times d}$ is a subdiagonal matrix, where $\Abf_{i,i-1}$ is drawn uniformly from $[.8,.99]$. Obviously, since $\{W_{k}\}$ and $\{V_{k}\}$ are i.i.d $\{\xi_{k}\}$ is a Markov chain. The objective of robust identification is to estimate the system parameter $x$ from these Markov samples. In robust identification problems, we want to find $x$ that optimizes 
\begin{align}
\underset{x\in\Rset^{d}}{\text{minimize }} f(x) = \Eset_{\pi} \left[F(x;\xi)\right],
\end{align}
where $F$ is some lost function, e.g., $F(x;\xi) = \left(\langle x,\xi^{1}\rangle - \xi^{2}\right)^2$. For solving this problem, we consider SGD with the Polyak-Rupert averaging, where an additional averaging iterate is used to improve the performance the classic SGD \cite{PolyakJ1992,Ruppert88}
\begin{align*}
y_{k+1} &= y_{k} - \beta_{k} \nabla f(y_{k};\xi_{k}),\\
x_{k+1} &= \frac{1}{k+1}\sum_{t=0}^{k}y_{k} = x_{k} + \frac{1}{k+1}\left(y_{k} - x_{k}\right),
\end{align*}
which is a special form of \eqref{alg:xy}. Other applications of SGD under Markov samples can be found in incremental optimization \cite{RamNV2009,JohanssonRJ2010}, where the iterates are updated based on a finite Markov chain. In general, it has been shown in \cite{SunSY2018} that using Markov samples also help to improve the performance of SGD as compared to the case of i.i.d samples. In this case, SGD with Markov samples, while using less data and computation, converges faster than the i.i.d counterpart.  

As another example, two-time-scale {\sf SA} has been used extensively to model  reinforcement learning methods, for example, gradient temporal difference (TD) learning  and actor-critic methods \cite{Maeietal2009,KondaT2003,wu_actor_critic2020,Hong_actor_critic2020,Khodadadian_2021} and the references therein. In reinforcement learning, problems are often modeled as Markov decision processes, therefore, Markov samples are a natural setting. To be specific, we consider the gradient {\sf TD} learning for solving the policy evaluation problem under nonlinear function approximations studied in \cite{Maeietal2009}, which can be viewed as a variant of \eqref{alg:xy}. In this problem, we want to estimate the cumulative rewards $V$ of a stationary policy using function approximations $V_{y}$, that is, our goal is to find $y$ so that $V_{y}$ is as close as possible to the true value $V$. Here, $V_{y}$ can be represented by a neural network where $y$ is the weight vector of the network. Let $\zeta$ be the environmental sate, whose transition is governed by a Markov process. In addition, let $\gamma$ be the discount factor, $\phi(\zeta) = \nabla V_{y}(\zeta)$ be the feature vector, and $r$ be the reward returned by the environment. Given a sequence of samples $\{\zeta_{k},r_{k}\}$, one version of  {\sf GTD} is
\begin{align*}
x_{k+1} &= x_{k} + \alpha_{k}( \delta_{k} - \phi(\zeta_{k})^{T}x_{k})\phi(\zeta_{k})\\
y_{k+1} &= y_{k} + \beta_{k}\Big[\left(\phi(\zeta_{k})-\gamma\phi(\zeta_{k+1})\right)\phi(\zeta_{k})^Tx_{k} - h _{k}\Big],
\end{align*}
where $\delta_{k}$ and $h_{k}$ are defined as  
\begin{align*}
\delta_{k} &= r_{k} + \gamma V_{y_{k}}(\zeta_{k+1}) - V_{y_{k}}(\zeta_{k}),\\
h_{k} &= (\delta_{k} - \phi(\zeta_{k})^Tx_{k})\nabla^2V_{y_{k}}(\zeta_{k})x_{k},    
\end{align*}
which is clearly a variant of \eqref{alg:xy} under some proper choice of $F$ and $G$. It has been observed that gradient {\sf TD} is more stable and performs better compared to the single-time-scale counterpart ({\sf TD} learning) under off-policy learning and nonlinear function approximations \cite{Maeietal2009}.

\subsection{Main contributions}
The focus of this paper is to derive the finite-time performance of the nonlinear two-time-scale {SA} under Markov randomness. In particular, under some proper choice of step sizes $\alpha_{k}$ and $\beta_{k}$, we show that the method achieves a convergence in expectation at a rate $\Ocal(\log(k)/k^{2/3})$, where $k$ is the number of iterations. Our convergence rate is similar to the ones in the i.i.d setting except for the log factor, which captures the mixing rate of the  Markov chain. Our analysis is mainly motivated by the classic singular perturbation theory for studying the asymptotic convergence of two-time-scale systems, that is, we consider a Lyapunov function that carefully characterizes the coupling between the two iterates. In addition, we utilize the geometric mixing time of the underlying Markov process to handle the bias and dependence in the data.

\subsection{Related works}
Given the broad applications of {\sf SA} in many areas, its convergence properties have received much interests for years. In particular, the asymptotic convergence of {\sf SA}, including its two-time-scale variant, can be established by using the (almost) Martingale convergence theorem  when the noise are i.i.d or the ordinary differential equation ({\sf ODE}) method for more general noise settings; see for example \cite{borkar2008,benveniste2012adaptive}. Under the right conditions both of these methods show that the noise effects eventually average out and the {\sf SA} iterate asymptotically converges to the desired solutions. 

The convergence rate of the single-time-scale {\sf SA} has been studied extensively for years under different settings due to its broad applications in machine learning and stochastic optimization. The asymptotic rate of this method can be studied by using the Central Limit Theorem ({\sf CLT}), but requiring substantially stronger assumptions \cite{borkar2008,KY2009,benveniste2012adaptive}. On the other hand, the finite-time bounds of {\sf SA} has been studied under both i.i.d and Markov settings; see for example \cite{Chen_SA_Envelope_2020,Qu_SyncSA_202,Mou_SA_2020,Qu_SyncSA_202,Karimi_colt2019, SrikantY2019_FiniteTD,Chen_MC_LinearSA_2020,ChenZDMC2019} and the references therein. We also note that there are also different work on studying the finite-time performance of {\sf SA} in the context of {\sf SGD} both in the i.i.d and Markovian noise models; see for example \cite{BottouCN2018, SunSY2018,DoanNPR2020a,Nagaraj_Acc_LS_NeurIPS2020} and the references therein.

Unlike the single-time-scale {\sf SA}, the convergence rates of the two-time-scale {\sf SA} are less understood due to the complicated interactions between the two step sizes and the iterates.  Specifically, the rates of the two-time-scale {\sf SA} has been studied mostly for the linear settings in both i.i.d and Markovian settings, i.e, when $F$ and $G$ are linear functions w.r.t their variables; see for example in \cite{KondaT2004, DalalTSM2018, Dalal_Szorenyi_Thoppe_2020, DoanR2019,GuptaSY2019_twoscale,Doan_two_time_SA2019,Kaledin_two_time_SA2020,Dalal_Szorenyi_Thoppe_2020}. For the nonlinear settings, we are only aware of the work in \cite{MokkademP2006,Doan_two_time_SA2020}, which considers the convergence rates of the nonlinear two-time-scale {\sf SA} in \eqref{alg:xy} under i.i.d settings. In particular, under the stability condition (Assumption $1$ in \cite{MokkademP2006}, $lim_{k\rightarrow\infty}(x_{k},y_{k}) = (x^{\star},y^{\star}))$ and when $F$ and $G$ can be locally approximated by linear functions in a neighborhood of $(x^{\star},y^{\star})$, a convergence rate of \eqref{alg:xy} in distribution is provided in \cite{MokkademP2006}. This work also shows that the rates of the fast-time and slow-time scales are asymptotically decoupled under proper choice of step sizes, which agrees with the previous observations of linear two-time-scale {\sf SA}; see for example \cite{KondaT2004}. On the other hand, the work in \cite{Doan_two_time_SA2020} studies the finite-time bound of \eqref{alg:xy} under different assumptions on the operators $F$ and $G$ as compared to the ones considered in \cite{MokkademP2006}. The setting considered in this paper is similar to the ones studied in \cite{Doan_two_time_SA2020}, explained in detail in Section \ref{sec:results}. However, unlike the work in \cite{MokkademP2006} and \cite{Doan_two_time_SA2020}, we study the finite-time performance of \eqref{alg:xy} under Markov randomness, where we consider different techniques as compared to the ones in \cite{MokkademP2006,Doan_two_time_SA2020} due to the dependence and bias of the observations in our updates. More details are discussed in the next section.




\section{Main Results}\label{sec:results}
In this section, we present in detail the main results of this paper, that is, we provide a finite-time analysis for the convergence rates of \eqref{alg:xy} in mean square errors. Under some certain conditions explained below, we show that the mean square errors converge to zero at a rate
\begin{align*}
\Eset\left[\|y_{k}-y^{\star}\|^2\right] + \frac{\beta_{k}}{\alpha_{k}}\Eset\left[\|x_{k}-x^{\star}\|^2\right] \leq \Ocal\left(\frac{1}{(k+1)^2} + \frac{\log(k+1)}{(k+1)^{2/3}}\right),    
\end{align*}
where the choice of $\beta_{k}\ll\alpha_{k}$ will be discussed explicitly in the next section. We note that this convergence rate is the same as the one in the i.i.d settings \cite{Doan_two_time_SA2020}, except for the log factor that captures the mixing time of the underlying Markov chain $\{\xi_{k}\}$. To derive our theoretical result, in the next two subsections we present main technical assumptions and preliminaries used in our analysis.

\subsection{Main Assumptions}
We first present the main technical assumptions used to derive our finite-time convergence results. First, we discuss the assumptions on the operators $F$ and $G$, which are motivated by the ones required to establish the stability of the corresponding deterministic two-time-scale differential equations of \eqref{alg:xy} in \cite{Kokotovic_SP1999}. For an ease of exposition, we assume here that $(x^{\star},y^{\star}) = (0,0)$. Since $\beta_{k}\ll\alpha_{k}$ the update of $x_{k}$ is referred to as the ``fast-time" scale while $y_{k}$ is updated at a ``slow-time" scale. The time-scale difference between these two updates is loosely defined by the ratio $\beta_{k}/\alpha_{k}\ll1$, which is equivalent to the one in \cite{Kokotovic_SP1999}. To further present our motivation we consider the case of constant step sizes, i.e., $\alpha_{k} = \alpha$ and $\beta_{k} = \beta \ll \alpha$ for some proper chosen constants $\alpha,\beta$. Under appropriate choice of step sizes and proper conditions on the noise sequence $\xi_{k}$, the ODE method shows that the asymptotic convergence of the iterates in \eqref{alg:xy} is equivalent to the stability the following differential equations \cite{borkar2008}
\begin{align}
\begin{aligned}
\dot{x} = \frac{dx}{dt} &= -F(x(t),y(t))\\   
\dot{y} = \frac{dy}{dt} &= -\frac{\beta}{\alpha} G(x(t),y(t)).
\end{aligned}\label{alg:xy_ODE}
\end{align} 
First, since $\beta/\alpha \ll 1$, $y(t)$ is updated much slower than $x(t)$, therefore, one can view $y(t)$ being static in $\dot{x}$. By fixing $y(t) = y$ we have 
\begin{align}
\frac{d x}{dt} = - F(x(t),y).\label{motivation:dotx}      
\end{align}
Thus, to study the stability of $x(t)$ one needs at least to guarantee that this ODE equation has a solution for any given $y$. In this case, the equilibrium of \eqref{motivation:dotx} is a function of $y$, i.e., there exists some operator $H$ such that  $F(H(y),y) = 0$. A standard condition to guarantee the existence of an equilibrium of \eqref{motivation:dotx} is that the operators $H$ and $F$ are Lipschitz continuous as stated in the following assumption.

\begin{assump}\label{assump:smooth:FH}
Given $y\in\Rset^{d}$ there exists an operator $H:\Rset^{d}\rightarrow\Rset^{d}$ such that $x = H(y)$ is the unique solution of
\begin{align*}
F(H(y),y) = 0,    
\end{align*}
where $H$ and $F$ are Lipschitz continuous with constant $L_{H}$ and $L_{F}$, respectively, i.e., $\forall x_{1}, x_{2}, y_{1}, y_{2} \in\Rset^{d}$
\begin{align}
&\hspace{-0.3cm}\|H(y_{1}) - H(y_{2})\| \leq L_{H}\|y_{1}-y_{2}\|,\label{assump:smooth:FH:ineqH}\\
&\hspace{-0.3cm}\|F(x_{1},y_1) - F(x_{2},y_2)\| \leq L_{F}(\|x_{1}-x_{2}\| + \|y_{1} - y_{2}\|).     \label{assump:smooth:FH:ineqF}
\end{align}
\end{assump}
\begin{remark}
In the case of linear two-time-scale SA, i.e., $F$ and $G$ are linear 
\begin{align*}
F(x,y) &= \Abf_{11}x + \Abf_{12}y,\\
G(x,y) &= \Abf_{21}x + \Abf_{22}y,
\end{align*}
where $\Abf_{11}$ is negative definite (but not necesarily symmetric) \cite{KondaT2004,Doan_two_time_SA2019}. We then have  $H(y) = -\Abf_{11}^{-1}\Abf_{12}y$ is a linear operator . 
\end{remark}
Second, for the global asymptotic convergence of $x(t)$ to the equilibrium of \eqref{motivation:dotx}  it is necessary that this equilibrium is unique. This condition is guaranteed if $F$ is strong monotone. 

\begin{assump}\label{assump:sm:F}
$F$ is strongly monotone w.r.t $x$  when $y$ is fixed, i.e., there exists a constant $\mu_{F} > 0$ 
\begin{align}
\left\langle x - z, F(x,y) - F(z,y) \right\rangle \geq \mu_{F} \|x-z\|^2. \label{assump:sm:F:ineq}    
\end{align}
\end{assump}
These two assumptions are also considered under different variants in the context of both linear and nonlinear two-time-scale {\sf SA} studied in \cite{KondaT2004, DalalTSM2018, DoanR2019,GuptaSY2019_twoscale,Doan_two_time_SA2019,Kaledin_two_time_SA2020,MokkademP2006}. Similarly, once $x(t)$ converges to $H(y(t))$ the convergence of $y(t)$ can be shown through studying the stability of the following differential equation
\begin{align}
\frac{dy}{dt} = -\frac{\beta}{\alpha} G(H(y(t)),y(t)).    \label{motivation:doty}
\end{align}
We again require two similar assumptions to guarantee the existence and uniqueness of the solution of \eqref{motivation:doty}. 
\begin{assump}\label{assump:G}
The operator $G(\cdot,\cdot)$ is Lipschitz continuous with constant $L_{G}$, i.e., $\forall x_{1}, x_{2}, y_{1}, y_{2} \in\Rset^{d}$,
\begin{align}
\hspace{-0.3cm}\|G(x_{1},y_{1}) - G(x_{2},y_{2})\| \leq L_{G}\left(\|x_{1} - x_{2}\| + \|y_{1} - y_{2}\| \right).    \label{assump:G:smooth}
\end{align}
Moreover, $G$ is $1$-point strongly monotone w.r.t $y^{\star}$, i.e., there exists a constant $\mu_{G} > 0$ such that for all $y\in\Rset^{d}$
\begin{align}
\left\langle y - y^{\star}, G(H(y),y) \right\rangle \geq \mu_{G} \|y - y^{\star}\|^2. \label{assump:G:sm}    
\end{align}
\end{assump}
Assumptions \ref{assump:smooth:FH}--\ref{assump:G} are used in \cite[Chapter 7]{Kokotovic_SP1999} to study the globally asymptotic stability of \eqref{alg:xy_ODE}. Our focus is on the finite-time convergence of the stochastic variant \eqref{alg:xy} of \eqref{alg:xy_ODE}. We, therefore, require the following assumption on the Lipschitz continuity of $F$ and $G$. 
\begin{assump}\label{assump:smooth:samples}
Given any $\xi$, the operators $F(\cdot,\cdot;\xi)$ and $G(\cdot,\cdot,\xi)$ are Lipschitz continuous with constant $L_{F}$ and $L_{G}$, respectively. That is, for any $x_{1},x_{2},y_{1},y_{2}\in\Rset^{d}$ we have
\begin{align}
    \begin{aligned}
&\|F(x_{1},y_{1};\xi) - F(x_{2},y_{2};\xi)\| \leq L_{F}(\|x_{1} - x_{2}\| + \|y_{1}-y_{2}\|)\quad \text{a.s.,}\\
&\|G(x_{1},y_{1};\xi) - G(x_{2},y_{2};\xi)\| \leq L_{G}(\|x_{1} - x_{2}\| + \|y_{1}-y_{2}\|)\quad \text{a.s.}
    \end{aligned}
\end{align} \label{assump:smooth:samples:ineq}
\end{assump}
Note that assumption \ref{assump:smooth:samples} is weaker than the boundedness condition on $F$ and $G$. Under this assumption, the iterates $\{x_{k},y_{k}\}$ can be potentially unbounded. 

Finally, we present the assumption on the noise model, which basically states that the Markov chain $\{\xi_{k}\}$ has geometric mixing time. In particular, we denote by $\tau(\alpha)$ the mixing time of $\{\xi_{k}\}$ associated with a positive constant $\alpha$, which basically tells us how long the Markov chain gets close to its stationary distribution $\pi$ \cite{LevinPeresWilmer2006}. The following assumption formally states the condition of $\tau(\alpha)$.
\begin{assump}\label{assump:mixing}
The sequence $\{\xi_{k}\}$ is a Markov chain with a compact state space $\Xi$ and has stationary distribution $\pi$. For all $x,y\in\Rset^{d}$ and $\xi\in\Xi$ and a given $\alpha > 0$ we have $\forall  k\geq \tau(\alpha)$
\begin{align}
\begin{aligned}
&\left\|\Eset[F(x,y;\xi_{k})] - F(x,y)\,|\, \xi_{0} = \xi\right\| \leq \alpha\\  
&\left\|\Eset[G(x,y;\xi_{k})] - G(x,y)\,|\, \xi_{0} = \xi\right\| \leq \alpha.
\end{aligned}\label{assump:mixing:ineq}
\end{align}
Moreover, $\{\xi_{k}\}$ has a geometric mixing time, i.e., there exists a positive constant $C$ such that \begin{align}
\tau(\alpha) = C\log\left(\frac{1}{\alpha}\right).\label{assump:mixing:tau}
\end{align} 
\end{assump} 
Assumption \ref{assump:mixing} basically sates that given $\alpha>0$ there exists $C>0$ s.t. $\tau(\alpha) = C\log(1/\alpha)$ and
\begin{align}
 \|\Pset^{k}(\xi_{0},\cdot) - \pi \|_{TV}  \leq \alpha,\quad \forall k\geq \tau(\alpha),\;\forall \xi_{0}\in\Xi,\label{notation:tau}    
\end{align}
where  $\|\cdot\|_{TV}$ is the total variance distance and $\Pset^{k}(\xi_{0},\xi)$ is the probability that $\xi_{k} = \xi$ when we start from $\xi_{0}$ \cite{LevinPeresWilmer2006}. This assumption holds in various applications, e.g, in incremental optimization \cite{RamNV2009,JohanssonRJ2010}, where the iterates are updated based on a finite Markov chain, and in reinforcement learning problems with  a finite number states \cite{silver2017mastering}.  Assumption \ref{assump:mixing} is used in the existing literature to study the finite-time performance of {\sf SGD} and SA under Markov randomness; see  \cite{SunSY2018, SrikantY2019_FiniteTD, Kaledin_two_time_SA2020} and the references therein. Finally, since $\Xi$ is compact the Lipschitz continuity of $F$ and $G$ also gives the following result. 
\begin{lem}\label{lem:FG_bounded}
Let $B$ be a constant defined as  
\begin{align}
B = \max\{\max_{\xi\in\Xi}\{\|F(0,0,\xi)\|, \|G(0,0,\xi)\|\},\|F(0,0)\|, \|G(0,0)\|, L_{F},L_{G},L_{H}\}.\label{lem:FG_bounded:B}
\end{align}
Then for all $x,y$ we have
\begin{align}
\begin{aligned}
&\max\{\|F(x,y,\xi)\|,\|F(x,y)\|\} \leq B(\|x\| + \|y\| + 1)\quad \text{a.s.},\\    
&\max\{\|G(x,y,\xi)\|,\|G(x,y)\|\} \leq B(\|x\| + \|y\| + 1)\quad \text{a.s.}
\end{aligned}\label{lem:FG_bounded:ineq}
\end{align}
\end{lem}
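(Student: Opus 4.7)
The plan is a direct bookkeeping argument: decompose $F(x,y;\xi)$ (resp.\ $F(x,y)$, $G(x,y;\xi)$, $G(x,y)$) around the origin using the triangle inequality, bound the displacement term by the appropriate Lipschitz constant from Assumptions \ref{assump:smooth:FH}, \ref{assump:G}, or \ref{assump:smooth:samples}, and bound the value at the origin by the corresponding term in the definition \eqref{lem:FG_bounded:B} of $B$. Since $B$ is chosen to dominate all of $L_F$, $L_G$, $L_H$, $\|F(0,0)\|$, $\|G(0,0)\|$, $\max_{\xi\in\Xi}\|F(0,0;\xi)\|$, and $\max_{\xi\in\Xi}\|G(0,0;\xi)\|$ simultaneously, both pieces can be absorbed into a single $B$-factor.

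Concretely, I would first write, for any $\xi$ and any $x,y\in\Rset^d$,
\begin{align*}
\|F(x,y;\xi)\| \;\leq\; \|F(x,y;\xi) - F(0,0;\xi)\| + \|F(0,0;\xi)\|.
\end{align*}
By Assumption \ref{assump:smooth:samples} the first summand is bounded almost surely by $L_F(\|x\|+\|y\|)\leq B(\|x\|+\|y\|)$, and by the definition of $B$ the second summand is bounded by $\max_{\xi\in\Xi}\|F(0,0;\xi)\|\leq B$, giving $\|F(x,y;\xi)\|\leq B(\|x\|+\|y\|+1)$ a.s. The same decomposition applied to $F(x,y)$, using the Lipschitz bound \eqref{assump:smooth:FH:ineqF} from Assumption \ref{assump:smooth:FH} and the bound $\|F(0,0)\|\leq B$, yields the deterministic inequality. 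The arguments for $G(x,y;\xi)$ and $G(x,y)$ are identical, invoking Assumption \ref{assump:smooth:samples} and the Lipschitz property \eqref{assump:G:smooth} from Assumption \ref{assump:G}, respectively, together with the bounds on $\|G(0,0;\xi)\|$ and $\|G(0,0)\|$ built into $B$.

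There is essentially no obstacle here; the only subtle point is that we implicitly need $\max_{\xi\in\Xi}\|F(0,0;\xi)\|$ and $\max_{\xi\in\Xi}\|G(0,0;\xi)\|$ to be finite, which follows from the compactness of $\Xi$ (Assumption \ref{assump:mixing}) together with the continuity of $F(0,0;\cdot)$ and $G(0,0;\cdot)$ in $\xi$ implicit in Assumption \ref{assump:smooth:samples}. The lemma is really a packaging step: by consolidating the Lipschitz constants and zero-order terms into a single constant $B$, later analysis of the iterates $(x_k,y_k)$ can treat the noisy operator values uniformly, which will be important when bounding second-moment terms that arise from the Markovian sampling in the main convergence proof.
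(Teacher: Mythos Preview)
Your proposal is correct and matches the paper's approach: the paper does not give an explicit proof, simply noting that the bound follows from the compactness of $\Xi$ and the Lipschitz continuity of $F$ and $G$, which is exactly the triangle-inequality-plus-Lipschitz argument you spell out. One minor remark: the continuity of $\xi\mapsto F(0,0;\xi)$ needed to invoke compactness is not actually stated in Assumption~\ref{assump:smooth:samples} (which only gives Lipschitz continuity in $(x,y)$ for fixed $\xi$), but the paper tacitly assumes $B$ is finite in the same way you do.
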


\subsection{Preliminaries}
We now present some preliminaries, which will be useful to derive our main result studied in the next subsection. For an ease of exposition, we present the analysis of these results in Section \ref{sec:lemmas_proofs}. 

To study the performance of SA one can analyze the convergence rate of the following mean square error
\begin{align*}
\Eset[\|x_{k}-x^\star\|^2+\|y_{k}-y^\star\|^2]\quad \text{to zero}.     
\end{align*}
However, this mean square error does not explicitly characterize  the coupling between the two iterates. We, therefore, consider a different notion of mean square error, which will help us to facilitate our development. In particular, under Assumption \ref{assump:smooth:FH} and by \eqref{prob:FG} we have $x^{\star} = H(y^{\star})$ and 
\begin{align*}
F(H(y^{\star}),y^{\star}) = 0\quad\text{and}\quad G(H(y^{\star}),y^{\star}) = 0.  
\end{align*}
The coupling between $x$ and $y$ is represented through $H$, motivated us to consider the following two residual variables
\begin{align}
\begin{aligned}
\xhat_{k} &= x_{k} - H(y_{k})\\   
\yhat_{k} &= y_{k} - y^{\star}.
\end{aligned}    \label{alg:xyhat}
\end{align}
Obviously, if $\yhat_{k}$ and $\xhat_{k}$ go to zero, $(x_{k},y_{k})\rightarrow(x^{\star},y^{\star})$. Thus, to establish the convergence of $(x_{k},y_{k})$ to $(x^{\star},y^{\star})$ one can instead study the convergence of $(\xhat_{k},\yhat_{k})$ to zero. The rest of this paper is to focus on deriving the convergence rates of these variables to zero.

For our analysis, we consider nonincreasing and nonnegative time-varying sequences of step sizes $\{\alpha_{k},\beta_{k}\}$ satisfying 
\begin{align}
\sum_{k=0}^{\infty}\alpha_{k} = \sum_{k=0}^{\infty}\beta_{k} = \infty\quad\text{and}\quad \sum_{k=0}^{\infty}\Big(\alpha_{k}^2 + \beta_{k}^2 + \frac{\beta_{k}^2}{\alpha_{k}}\Big) < \infty,      \label{notation:alpha_beta}
\end{align} 
which also implies that $\beta_{k}\ll \alpha_{k}$ and $\lim_{k\rightarrow \infty}\alpha_{k} = \lim_{k\rightarrow \infty}\beta_{k} = 0$. Since $\alpha_{k}$ decreases to zero and $\tau(\alpha_{k}) = \log(1/\alpha_{k})$ there exists a positive integer $\Kcal^{\star}$ s.t. 
\begin{align}\label{notation:K*}
\alpha_{k;\tau(\alpha_{k})} \triangleq\sum_{t=k-\tau(\alpha_{k})}^{k}\alpha_{t} \leq \tau(\alpha_{k})\alpha_{k-\tau(\alpha_{k})} \leq \min\left\{\frac{\log(2)}{2B},\, \alpha_{0}\right\},\quad \forall k \geq\Kcal^{\star}. 
\end{align} For convenience, we denote by 
\begin{align}
\begin{aligned}
\psi_{k} &= F(x_{k},y_{k};\xi_{k}) - F(x_{k},y_{k}),\\
\zeta_{k} &= G(x_{k},y_{k};\xi_{k}) - G(x_{k},y_{k}),
\end{aligned}\label{notation:psi_zeta}
\end{align}
so \eqref{alg:xy} can be rewritten as
\begin{align}
\begin{aligned}
x_{k+1} &= x_{k} - \alpha_{k}(F(x_{k},y_{k}) + \psi_{k})\\   
y_{k+1} &= y_{k} - \beta_{k}(G(x_{k},y_{k}) + \zeta_{k}),
\end{aligned}\label{alg:xy_noise}
\end{align}
Note that $\{\psi_{k},\zeta_{k}\}$ are Markovian, therefore, they are dependent and have mean different to zero. We denote by $\Qcal_{k}$ the filtration contains all the history generated by the algorithms upto time $k$, i.e.,
\begin{align*}
\Qcal_{k} = \{x_{0},y_{0},\xi_{0},\xi_{1},\psi_{1},\ldots,\xi_{k-1}\}.
\end{align*}
Finally, for an ease of exposition we define 
We denote by 
\begin{align}
z = \left[\begin{array}{c}
x\\
y 
\end{array}\right],\quad \zhat = \left[\begin{array}{c}
x - H(y)\\
y - y^{\star}
\end{array}\right]= \left[\begin{array}{c}
\xhat\\
\yhat 
\end{array}\right].\label{notation:z_zhat}
\end{align}
The main challenges in our analysis are two fold: $1)$ the coupling between the fast and slow iterates and $2)$ the dependence and bias in the observations of $F$ and $G$ due to the Markov model. We handle the first challenge by introducing a proper weighted Lyapunov function that combines the norms of $\xhat_{k}$ and $\yhat_{k}$, which we will discuss in the next section. On the other hand, we utilize the geometric mixing time to handle the Markovian noise, which is used in the following three lemmas to characterize the sizes of the two residual variables.

\begin{lem}\label{lem:xhat}
Suppose that Assumptions \ref{assump:smooth:FH}--\ref{assump:mixing} hold. Then we have for all $k\geq \Kcal^*$
\begin{align}
\Eset[\|\xhat_{k+1}\|^2]  &\leq (1-\mu_{F}\alpha_{k})\Eset[\|\xhat_{k}\|^2] + 32(1+B)^{6}\Big(\frac{5\beta_{k}^2}{\mu_{F}\alpha_{k}}  + \beta_{k}^2 + \alpha_{k;\tau(\alpha_{k})}\alpha_{k}\Big)\Eset[\|\zhat_{k}\|^2]\notag\\ 
&\quad + 32(1+B)^{6}(\|y^{\star}\| + \|H(0)\| + 1)^2\Big(\frac{5\beta_{k}^2}{\mu_{F}\alpha_{k}}  + \beta_{k}^2 + \alpha_{k}\alpha_{k;\tau(\alpha_{k})}\Big). \label{lem:xhat:ineq}
\end{align}
\end{lem}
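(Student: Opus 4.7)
The plan is to decompose $\hat{x}_{k+1}$ so that the strong-monotonicity of $F$ can be extracted as a contraction, and then absorb every other term into the three error families appearing on the right-hand side: one proportional to $\|\hat{x}_k\|^2$, one proportional to $\|\hat{z}_k\|^2$, and one ``forcing'' term tracking $(\|y^\star\|+\|H(0)\|+1)^2$. Concretely, I would first write
\begin{align*}
\hat{x}_{k+1} \;=\; \hat{x}_k \;-\; \alpha_k F(x_k,y_k) \;-\; \alpha_k\psi_k \;-\; \bigl(H(y_{k+1})-H(y_k)\bigr),
\end{align*}
and then square and expand. Using Assumption~\ref{assump:smooth:FH} and Assumption~\ref{assump:G}, the displacement of the slow iterate satisfies $\|H(y_{k+1})-H(y_k)\|\le L_H\beta_k\|G(x_k,y_k)+\zeta_k\|$, which by Lemma~\ref{lem:FG_bounded} is $\mathcal{O}(\beta_k)(\|\hat z_k\|+\|y^\star\|+\|H(0)\|+1)$.

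Next I would handle the deterministic ``drift'' part $\|\hat{x}_k-\alpha_k F(x_k,y_k)\|^2$. Because $F(H(y_k),y_k)=0$, Assumption~\ref{assump:sm:F} gives
\begin{align*}
\langle \hat{x}_k,\,F(x_k,y_k)\rangle \;=\; \langle \hat{x}_k,\,F(x_k,y_k)-F(H(y_k),y_k)\rangle \;\ge\; \mu_F\|\hat{x}_k\|^2,
\end{align*}
while Assumption~\ref{assump:smooth:FH} and Lemma~\ref{lem:FG_bounded} bound $\|F(x_k,y_k)\|$ linearly in $\|\hat z_k\|$ plus constants. This already produces a $(1-2\mu_F\alpha_k)\|\hat x_k\|^2$ factor, and the leftover $\alpha_k^2\|F\|^2$ piece is absorbed into the $\alpha_k\alpha_{k;\tau(\alpha_k)}\|\hat z_k\|^2$ slot (since $\alpha_k\le\alpha_{k;\tau(\alpha_k)}$ by \eqref{notation:K*}). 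The cross term with $H(y_{k+1})-H(y_k)$ I split with Young's inequality tuned to $\mu_F\alpha_k$, namely $2ab\le \mu_F\alpha_k a^2+ b^2/(\mu_F\alpha_k)$ applied to $a=\|\hat x_k\|$ and $b=\mathcal{O}(\beta_k)\|\hat z_k\|$; this is where the characteristic $\beta_k^2/(\mu_F\alpha_k)$ factor appears, and it also recovers the $\mu_F\alpha_k$ in the contraction coefficient.

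The main obstacle is the cross term $-2\alpha_k\mathbb{E}[\langle \hat{x}_k,\psi_k\rangle]$, because $\{\psi_k\}$ is a Markovian, biased noise with $\mathbb{E}[\psi_k]\neq 0$. Here I would invoke Assumption~\ref{assump:mixing} with the shift-and-mix argument: pick $\tau=\tau(\alpha_k)$ and compare $\langle\hat x_k,\psi_k\rangle$ with $\langle \hat x_{k-\tau},F(x_{k-\tau},y_{k-\tau};\xi_k)-F(x_{k-\tau},y_{k-\tau})\rangle$. The iterate drift between times $k-\tau$ and $k$ is controlled by $\alpha_{k;\tau(\alpha_k)}$ via Assumption~\ref{assump:smooth:samples} and Lemma~\ref{lem:FG_bounded}, so all the difference terms contribute at most $\alpha_{k;\tau(\alpha_k)}(1+\|\hat z_k\|+\|y^\star\|+\|H(0)\|)^2$. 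Finally, conditioning on $\mathcal{Q}_{k-\tau}$ and applying \eqref{assump:mixing:ineq} shows the residual inner product is bounded by $\alpha_k\cdot(\text{sublinear in }\|\hat z_{k-\tau}\|)$, which transfers back to $\|\hat z_k\|$ with the same drift bound.

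The last step is bookkeeping: collect all contributions, use $\|\hat x_k\|\le\|\hat z_k\|$ and $2ab\le a^2+b^2$ to combine the $\|\hat z_k\|$ prefactors and the constant (forcing) prefactors, absorb the $(1+B)$-power constants using the definition of $B$ in \eqref{lem:FG_bounded:B}, and use the step-size smallness $\alpha_{k;\tau(\alpha_k)}\le \log(2)/(2B)$ from \eqref{notation:K*} to keep the $-\mu_F\alpha_k$ coefficient (as opposed to $-2\mu_F\alpha_k$) after the Young step. Packaging the result then yields the claimed inequality \eqref{lem:xhat:ineq}; the constant $32$ and the exponent $6$ on $(1+B)$ are what comes out of multiplying the four or five loose $\mathcal{O}(1)(1+B)^k$ factors accumulated along the way.
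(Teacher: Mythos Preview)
Your proposal is correct and mirrors the paper's proof almost step for step: the same decomposition of $\hat x_{k+1}$, the same use of $F(H(y_k),y_k)=0$ with Assumption~\ref{assump:sm:F} to extract the $(1-2\mu_F\alpha_k)$ contraction, the same Young split tuned to $\mu_F\alpha_k$ on the $H(y_k)-H(y_{k+1})$ cross term to produce the $\beta_k^2/(\mu_F\alpha_k)$ factor, and the same shift-by-$\tau(\alpha_k)$ mixing argument for $-2\alpha_k\Eset[\langle\hat x_k,\psi_k\rangle]$. The only organizational difference is that the paper packages the Markovian bias estimate into a separate auxiliary result (Lemma~\ref{lem:xhat_bias}) and bounds $\|\psi_k\|,\|\zeta_k\|$ via Lemma~\ref{lem:psi_zeta_bound}, whereas you describe doing both inline; the substance is identical.
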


\begin{lem}\label{lem:yhat}
Suppose that Assumptions \ref{assump:smooth:FH}--\ref{assump:mixing} hold.  Then we have for all $k\geq \Kcal^*$
\begin{align}
\Eset[\|\yhat_{k+1}\|^2]  &\leq (1- \mu_{G}\beta_{k})\Eset[\|\yhat_{k}\|^2]   + 18(1+B)^{4}(\alpha_{k}\beta_{k} + 10B\alpha_{k;\tau(\alpha_{k})}\beta_{k} + 3\beta_{k}^2)\Eset[\|\zhat_{k}\|^2] \notag\\
&\quad + 24(1+B)^{4}(\|y^{\star}\| + \|H(0)\| + 1)^2(\beta_{k}^2 + 7B\alpha_{k;\tau(\alpha_{k})}\beta_{k}) + \frac{B^2}{\mu_{G}}\beta_{k}\Eset[\|\xhat_{k}\|^2].\label{lem:yhat:ineq}
\end{align}
\end{lem}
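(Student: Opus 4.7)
The plan is to square the $y$-update in residual form and take expectations, separating three contributions that will produce, respectively, the $(1-\mu_{G}\beta_{k})$ contraction together with the $\frac{B^{2}}{\mu_{G}}\beta_{k}\|\xhat_{k}\|^{2}$ cross-coupling, the $O(\beta_{k}^{2})$ pieces, and the mixing-driven $O(\alpha_{k}\beta_{k})$ and $O(\alpha_{k;\tau(\alpha_{k})}\beta_{k})$ pieces. Subtracting $y^{\star}$ from the second line of \eqref{alg:xy_noise} gives $\yhat_{k+1} = \yhat_{k} - \beta_{k}G(x_{k},y_{k}) - \beta_{k}\zeta_{k}$, so
\begin{align*}
\|\yhat_{k+1}\|^{2} = \|\yhat_{k}\|^{2} - 2\beta_{k}\langle \yhat_{k}, G(x_{k},y_{k})\rangle - 2\beta_{k}\langle \yhat_{k}, \zeta_{k}\rangle + \beta_{k}^{2}\|G(x_{k},y_{k}) + \zeta_{k}\|^{2},
\end{align*}
and I would treat the three terms on the right separately.

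For the drift term I would add and subtract $G(H(y_{k}),y_{k})$ and apply the one-point strong monotonicity in Assumption~\ref{assump:G} to obtain $\langle \yhat_{k}, G(H(y_{k}),y_{k})\rangle \geq \mu_{G}\|\yhat_{k}\|^{2}$, while the Lipschitz bound $\|G(x_{k},y_{k}) - G(H(y_{k}),y_{k})\|\leq B\|\xhat_{k}\|$ (using $L_{G}\leq B$) combined with Young's inequality tuned to $\mu_{G}$ absorbs the remainder into $\tfrac{\mu_{G}}{2}\|\yhat_{k}\|^{2} + \tfrac{B^{2}}{2\mu_{G}}\|\xhat_{k}\|^{2}$. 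This delivers the $-\mu_{G}\beta_{k}\|\yhat_{k}\|^{2}$ contraction and the $\tfrac{B^{2}}{\mu_{G}}\beta_{k}\|\xhat_{k}\|^{2}$ cross-coupling exactly as they appear in the statement. For the second-moment term I would invoke Lemma~\ref{lem:FG_bounded} on $\|G(x_{k},y_{k}) + \zeta_{k}\| = \|G(x_{k},y_{k};\xi_{k})\|\leq B(\|x_{k}\|+\|y_{k}\|+1)$ and then rewrite $\|x_{k}\|\leq \|\xhat_{k}\| + L_{H}\|\yhat_{k}\| + L_{H}\|y^{\star}\| + \|H(0)\|$ and $\|y_{k}\|\leq \|\yhat_{k}\| + \|y^{\star}\|$ to express everything in terms of $\|\zhat_{k}\|$ and the fixed constant $\|y^{\star}\| + \|H(0)\| + 1$; this produces the $3\beta_{k}^{2}\|\zhat_{k}\|^{2}$ and $\beta_{k}^{2}$ constant contributions appearing in \eqref{lem:yhat:ineq}.

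The main obstacle is the biased noise inner product $-2\beta_{k}\Eset[\langle \yhat_{k}, \zeta_{k}\rangle]$, since $\zeta_{k}$ is correlated with $\yhat_{k}$ through the Markov chain $\{\xi_{k}\}$. Setting $\tau = \tau(\alpha_{k})$ and introducing the shifted noise $\widetilde{\zeta}_{k} = G(x_{k-\tau},y_{k-\tau};\xi_{k}) - G(x_{k-\tau},y_{k-\tau})$, I would use the telescoping split
\begin{align*}
\langle \yhat_{k}, \zeta_{k}\rangle = \langle \yhat_{k-\tau}, \widetilde{\zeta}_{k}\rangle + \langle \yhat_{k-\tau}, \zeta_{k} - \widetilde{\zeta}_{k}\rangle + \langle \yhat_{k} - \yhat_{k-\tau}, \zeta_{k}\rangle.
\end{align*}
Conditioning on $\Qcal_{k-\tau}$ and invoking Assumption~\ref{assump:mixing}, the first piece is controlled by $\alpha_{k}\|\yhat_{k-\tau}\|\cdot B(\|x_{k-\tau}\|+\|y_{k-\tau}\|+1)$; an AM--GM bound then yields the $\alpha_{k}\beta_{k}\|\zhat_{k}\|^{2}$ coefficient. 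For the remaining two pieces, Assumption~\ref{assump:smooth:samples} gives $\|\zeta_{k}-\widetilde{\zeta}_{k}\|\leq 2B(\|x_{k}-x_{k-\tau}\| + \|y_{k}-y_{k-\tau}\|)$, and iterating \eqref{alg:xy_noise} against Lemma~\ref{lem:FG_bounded} shows that $\|x_{k}-x_{k-\tau}\|$, $\|y_{k}-y_{k-\tau}\|$, and $\|\yhat_{k}-\yhat_{k-\tau}\|$ are all of order $\alpha_{k;\tau(\alpha_{k})}(\|\zhat_{k-\tau}\|+\|y^{\star}\|+\|H(0)\|+1)$. The assumption $k\geq \Kcal^{\star}$ in \eqref{notation:K*} keeps $\alpha_{k;\tau(\alpha_{k})}$ small enough that $\|\zhat_{k-\tau}\|$ can be replaced by $\|\zhat_{k}\|$ up to a universal constant. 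Multiplying the resulting bounds by $\beta_{k}$, applying Young's inequality, and collecting the constants into the $(1+B)^{4}$ prefactor produces the $\alpha_{k;\tau(\alpha_{k})}\beta_{k}$ contributions on both $\|\zhat_{k}\|^{2}$ and the constant term, finishing the argument.
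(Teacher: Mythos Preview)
Your proposal is correct and follows essentially the same approach as the paper: square the $y$-update, use the one-point strong monotonicity of $G$ for the contraction and Young's inequality for the $\|\xhat_{k}\|$ coupling, and handle the Markov bias $-\beta_{k}\Eset[\yhat_{k}^{T}\zeta_{k}]$ by a mixing-time telescoping (the paper packages this step as Lemma~\ref{lem:yhat_bias}, telescoping against $\zeta_{k-\tau(\alpha_{k})}$ rather than your frozen-iterate noise $\widetilde{\zeta}_{k}$, but the two decompositions are interchangeable). One minor point: under Assumption~\ref{assump:mixing} as stated the conditional bias of $\widetilde{\zeta}_{k}$ is bounded by $\alpha_{k}$ directly, so the extra factor $B(\|x_{k-\tau}\|+\|y_{k-\tau}\|+1)$ you attach to the first piece is unnecessary, though harmless for the final estimate.
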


\begin{lem}\label{lem:zhat:upperbound}
Suppose that Assumptions \ref{assump:smooth:FH}--\ref{assump:mixing} hold. Let $D_{1}$ and $D_{2}$ be defined as 
\begin{align}
\begin{aligned}
D_{1} &=  \sum_{k=0}^{\infty} \frac{\beta_{k}^2}{\mu\alpha_{k}} +  \beta_{k}^2 + \alpha_{k}\alpha_{k;\tau(\alpha_{k})} < \infty,\\
D_{2} &= 160(1+B)^{6}(\|y^{\star}\| + \|H(0)\| + 1)^2.
\end{aligned}
\label{notation:CD}
\end{align}
Then we obtain for all $k\geq \Kcal^*$
\begin{align}
\Eset[\|\zhat_{k}\|^2] &\leq D \triangleq  \Eset[\|\zhat_{0}\|^2]e^{160D_{1}(B+1)^{6}}  + D_{1}D_{2}e^{320D_{1}(B+1)^{6}}.   \label{lem:zhat:upperbound:ineq}
\end{align}
\end{lem}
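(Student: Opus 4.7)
The plan is to add the two per-step bounds of Lemmas~\ref{lem:xhat} and~\ref{lem:yhat} to produce a single scalar recursion for $\Eset[\|\zhat_k\|^2] = \Eset[\|\xhat_k\|^2]+\Eset[\|\yhat_k\|^2]$, then close it by a discrete Gr\"onwall-type argument. All extra terms on the right-hand sides of~\eqref{lem:xhat:ineq} and~\eqref{lem:yhat:ineq} are a summable coefficient times either $\Eset[\|\zhat_k\|^2]$ or an initial-condition constant, so the expected form is $\Eset[\|\zhat_{k+1}\|^2]\leq (1+c_k)\Eset[\|\zhat_k\|^2]+d_k$ with $\sum c_k, \sum d_k$ both finite.

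First I would sum \eqref{lem:xhat:ineq} and \eqref{lem:yhat:ineq}. After the addition, the coefficient of $\Eset[\|\xhat_k\|^2]$ is $(1-\mu_F\alpha_k)+\tfrac{B^2}{\mu_G}\beta_k$, which I would argue is at most $1$ for $k\geq \Kcal^*$ because $\beta_k/\alpha_k\to 0$ (implicit in~\eqref{notation:alpha_beta}); the coefficient of $\Eset[\|\yhat_k\|^2]$ is at most $1-\mu_G\beta_k\leq 1$. Using $\|\xhat_k\|^2,\|\yhat_k\|^2\leq \|\zhat_k\|^2$ to collapse the two scalars into one, the recursion
\begin{align*}
\Eset[\|\zhat_{k+1}\|^2] \;\leq\; (1+c_k)\,\Eset[\|\zhat_k\|^2] + d_k,\qquad k\geq \Kcal^*,
\end{align*}
should drop out with $c_k$ and $d_k$ each bounded by a constant multiple of $\tfrac{\beta_k^2}{\mu\alpha_k}+\beta_k^2+\alpha_k\alpha_{k;\tau(\alpha_k)}$, so that $\sum c_k \leq 160(1+B)^6 D_1$ and $\sum d_k\leq D_1 D_2$ by~\eqref{notation:CD}. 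Iterating and using $1+x\leq e^x$ then gives $\Eset[\|\zhat_k\|^2]\leq \Eset[\|\zhat_0\|^2]e^{\sum c_j}+\bigl(\sum d_j\bigr)e^{\sum c_j}$, which, after substituting the tail bounds, reproduces~\eqref{lem:zhat:upperbound:ineq}; the extra factor of two in the second exponent is a harmless overcount from bounding the prefactor $D_2$ loosely or from propagating $\Eset[\|\zhat_0\|^2]$ through the transient $0\leq k<\Kcal^*$.

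The main obstacle will be in the combining step, namely absorbing the ``wrong-signed'' term $\tfrac{B^2}{\mu_G}\beta_k\Eset[\|\xhat_k\|^2]$ coming from Lemma~\ref{lem:yhat} into the contraction $(1-\mu_F\alpha_k)\Eset[\|\xhat_k\|^2]$ of Lemma~\ref{lem:xhat}. This is precisely the point where the two-time-scale separation $\beta_k\ll\alpha_k$ is essential---quantitatively, it enforces that ``the slow iterate does not destabilize the fast iterate''---and the calibration in~\eqref{notation:K*} is tuned so this absorption is valid from $k=\Kcal^*$ onward. Handling the finitely many transient steps $k<\Kcal^*$ is a secondary bookkeeping concern that can be dispatched by a crude Lipschitz propagation via Assumption~\ref{assump:smooth:samples}, folding any resulting multiplicative factor into the exponential prefactors in $D$.
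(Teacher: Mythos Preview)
Your proposal is essentially the same argument as the paper's: add \eqref{lem:xhat:ineq} and \eqref{lem:yhat:ineq}, absorb the cross term $\tfrac{B^2}{\mu_G}\beta_k\Eset[\|\xhat_k\|^2]$ into the fast contraction $(1-\mu_F\alpha_k)\Eset[\|\xhat_k\|^2]$, obtain a scalar recursion $\Eset[\|\zhat_{k+1}\|^2]\leq (1+c_k)\Eset[\|\zhat_k\|^2]+d_k$ with $c_k,d_k$ summable by \eqref{notation:CD}, and close with $1+x\leq e^x$.

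One small correction: the absorption $-\mu_F\alpha_k+\tfrac{B^2}{\mu_G}\beta_k\leq 0$ is not guaranteed by \eqref{notation:K*}, which only controls $\alpha_{k;\tau(\alpha_k)}$; the paper invokes the explicit step-size ratio condition $\beta_0/\alpha_0\leq \mu_F\mu_G/B^2$ from \eqref{thm:main:stepsize} (so it holds for all $k\geq 0$, not just $k\geq\Kcal^*$). Your observation that $\beta_k/\alpha_k\to 0$ would also suffice for large $k$, but would require redefining the threshold. Otherwise your plan and the paper's proof coincide, including the loose handling of the transient $0\leq k<\Kcal^*$, which the paper glosses over in exactly the way you describe.
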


\subsection{Convergence Rates}
In this section, we present the main result of this paper, which is the convergence rate of \eqref{alg:xy}. To do it, we introduce the following candidate of Lypapunov function, which takes into account the time-scale difference between these two residual variables
\begin{align}
V(\xhat_{k},\yhat_{k}) &= \Eset[\|\yhat_{k}\|^2] + \frac{2B^2}{\mu_{F}\mu_{G}}\frac{\beta_{k}}{\alpha_{k}} \Eset[\|\xhat_{k}\|^2],\label{notation:V}    
\end{align}
where $\frac{2B^2}{\mu_{F}\mu_{G}}\frac{\beta_{k}}{\alpha_{k}}$ is to characterize the time-scale difference between the two residuals. Our main result, which is the finite-time bound of the rates of the residual variables to zero in expectation, is formally stated in the following theorem.


\begin{thm}\label{thm:main}
Suppose that Assumptions \ref{assump:smooth:FH}--\ref{assump:mixing} hold. Let $\{x_{k},y_{k}\}$ be generated by \eqref{alg:xy} with $x_{0}$ and $y_{0}$ initialized arbitrarily. Let $\alpha_{k},\beta_{k}$ be two sequence of nonnegative and nonincreasing step sizes satisfying 
\begin{align}
\begin{aligned}
&\alpha_{k} = \frac{\alpha_{0}}{(k+1)^{2/3}},\quad \beta_{k} = \frac{\beta_{0}}{k+1},\quad\frac{\beta_{0}}{\alpha_{0}} \leq \max\left\{\frac{\mu_{F}}{2\mu_{G}}\,,\,\frac{\mu_{F}\mu_{G}}{B^2}\right\}, \qquad \beta_{0} \geq\frac{1}{\mu_{G}}. \label{thm:main:stepsize}
\end{aligned}
\end{align}
Moreover, let $C$ be defined in \eqref{assump:mixing:tau}, and $D_{2},D$ be given in Lemma \ref{lem:zhat:upperbound}. Then we have for all $k\geq\Kcal^*$
\begin{align}
V_{k+1}  &\leq \frac{(\Kcal^*)^2V_{\Kcal^*}}{(k+1)^2} + \frac{5D_{2}+64D(1+B)^{8}}{2\mu_{F}\mu_{G}}\Big(\frac{5\beta_{0}^3 + 2\mu_{F}\beta_{0}\alpha_{0}^3}{\mu_{F}\alpha_{0}^2} \frac{1}{(k+1)^{2/3}} + \frac{4C\beta_{0}\alpha_{0}\log((k+1)/\alpha_{0})}{(k+1)^{2/3}}\Big).\label{thm:main:ineq}
\end{align}

\end{thm}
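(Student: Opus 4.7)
The plan is to combine Lemmas 1 and 2 through the weighted Lyapunov function $V_k$ so that the argument reduces to a scalar one-step contraction, then to use Lemma 3 to bound $\Eset[\|\zhat_k\|^2]$ uniformly by $D$, and finally to unroll the recursion under the prescribed step-size schedule. Concretely, I would add the bound from Lemma 2 to the bound from Lemma 1 multiplied by the weight $\frac{2B^2}{\mu_F\mu_G}\frac{\beta_{k+1}}{\alpha_{k+1}}$ appearing in $V_{k+1}$. The sum produces a right-hand side with (i) a contracting $(1-\mu_G\beta_k)\Eset[\|\yhat_k\|^2]$ term from Lemma 2, (ii) a contracting $(1-\mu_F\alpha_k)$ factor on the weighted $\Eset[\|\xhat_k\|^2]$ piece from Lemma 1, (iii) a cross term $\frac{B^2}{\mu_G}\beta_k\Eset[\|\xhat_k\|^2]$ coming out of Lemma 2 that couples the two iterates, and (iv) additive ``Markovian'' errors of the form $\beta_k^2$, $\alpha_k\beta_k$, $\beta_k^3/\alpha_k^2$ and $\alpha_{k;\tau(\alpha_k)}\beta_k$ after replacing $\Eset[\|\zhat_k\|^2]$ by $D$.

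The main obstacle is to show that the total coefficient of $\Eset[\|\xhat_k\|^2]$ in this sum is dominated by $(1-\mu_G\beta_k)\frac{2B^2}{\mu_F\mu_G}\frac{\beta_k}{\alpha_k}$, since only then does the argument close as $V_{k+1}\leq(1-\mu_G\beta_k)V_k+E_k$. My first observation is that under \eqref{thm:main:stepsize} the ratio $\beta_k/\alpha_k=(\beta_0/\alpha_0)(k+1)^{-1/3}$ is non-increasing, hence $\beta_{k+1}/\alpha_{k+1}\leq\beta_k/\alpha_k$, which lets me replace the weight at time $k+1$ by the one at time $k$. After cancelling the common $\frac{2B^2}{\mu_F\mu_G}\frac{\beta_k}{\alpha_k}$ term, the required inequality reduces to $\frac{2\beta_k}{\mu_F\alpha_k}\leq\frac{1}{\mu_G}$, i.e.\ $\beta_k/\alpha_k\leq\mu_F/(2\mu_G)$, which is ensured for every $k$ by the step-size condition $\beta_0/\alpha_0\leq\mu_F/(2\mu_G)$ in \eqref{thm:main:stepsize}. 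The coefficient $\frac{2B^2}{\mu_F\mu_G}$ inside $V_k$ is calibrated precisely to absorb the cross term that Lemma 2 generates, which is the characteristic feature of the singular-perturbation Lyapunov construction.

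Once the contractive recursion $V_{k+1}\leq(1-\mu_G\beta_k)V_k+E_k$ is in hand for $k\geq\Kcal^*$, the remaining work is bookkeeping. Plugging in $\alpha_k=\alpha_0/(k+1)^{2/3}$, $\beta_k=\beta_0/(k+1)$ and $\tau(\alpha_k)=\Ocal(\log(k+1))$ (so that $\alpha_{k;\tau(\alpha_k)}=\Ocal(\log(k+1)/(k+1)^{2/3})$), every building block of $E_k$ becomes $\Ocal(\log(k+1)/(k+1)^{5/3})$. I would then unroll via the standard product-sum identity
\[
V_{k+1}\leq V_{\Kcal^*}\prod_{t=\Kcal^*}^{k}(1-\mu_G\beta_t)+\sum_{t=\Kcal^*}^{k}E_t\prod_{s=t+1}^{k}(1-\mu_G\beta_s).
\]
The product $\prod(1-\mu_G\beta_0/(t+1))$ behaves like $(\Kcal^*/(k+1))^{\mu_G\beta_0}$, and under the assumption $\beta_0\geq 1/\mu_G$ (sharpened as necessary in the computation so the leading power is at least two, e.g.\ by iterating the one-step bound or using a slightly tighter telescoping) this yields the $(\Kcal^*)^2V_{\Kcal^*}/(k+1)^2$ initial-condition term. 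Convolving this geometric-like kernel against the $\Ocal(\log(k+1)/(k+1)^{5/3})$ tail produces exactly the $\log(k+1)/(k+1)^{2/3}$ dependence in \eqref{thm:main:ineq}; matching leading constants against the explicit expression on the right-hand side is then a routine calculation.
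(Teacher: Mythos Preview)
Your proposal is correct and follows essentially the same route as the paper: combine Lemmas~\ref{lem:xhat} and~\ref{lem:yhat} via the weight $\omega_k=\frac{2B^2}{\mu_F\mu_G}\frac{\beta_k}{\alpha_k}$, use $\beta_k/\alpha_k\leq\mu_F/(2\mu_G)$ to absorb the cross term $\frac{B^2}{\mu_G}\beta_k\Eset[\|\xhat_k\|^2]$, invoke Lemma~\ref{lem:zhat:upperbound} to replace $\Eset[\|\zhat_k\|^2]$ by $D$, and then unroll the resulting scalar recursion $V_{k+1}\leq(1-\mu_G\beta_k)V_k+E_k$. The only cosmetic difference is in the unrolling: the paper multiplies both sides of the one-step bound by $(k+1)^2$ and uses $(k+1)^2(1-\mu_G\beta_k)\leq k^2$ (which in fact requires $\mu_G\beta_0\geq 2$, the ``sharpening'' you anticipated) to telescope directly, rather than the product--sum identity you describe, but both devices are standard and lead to the same bound.
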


\begin{proof}
For convenience, we denote by $\omega_{k}$ 
\[\omega = \frac{2B^2}{\mu_{F}\mu_{G}}\frac{\beta_{k}}{\alpha_{k}}\cdot\] 
Since $\beta_{k}/\alpha_{k}$ is nonincreasing and less than $1$, multiplying both sides of \eqref{lem:xhat:ineq} by $\omega_{k}$ we have
\begin{align*}
\omega_{k+1}\Eset[\|\xhat_{k+1}\|^2]  &\leq \omega_{k}(1-\mu_{F}\alpha_{k})\Eset[\|\xhat_{k}\|^2] + \frac{64B^2(1+B)^{6}}{\mu_{F}\mu_{G}}\Big(\frac{5\beta_{k}^3}{\mu_{F}\alpha_{k}^2}  + \beta_{k}^2 + \alpha_{k;\tau(\alpha_{k})}\beta_{k}\Big)\Eset[\|\zhat_{k}\|^2]\notag\\ 
&\quad + \frac{64B^2(1+B)^{6}}{\mu_{F}\mu_{G}}(\|y^{\star}\| + \|H(0)\| + 1)^2\Big(\frac{5\beta_{k}^3}{\mu_{F}\alpha_{k}^2}  + \beta_{k}^2 + \alpha_{k;\tau(\alpha_{k})}\beta_{k}\Big)\notag\\
&\leq (1-\mu_{G}\beta_{k})\omega_{k}\Eset[\|\xhat_{k}\|^2] + (\mu_{G}\beta_{k} - \mu_{F}\alpha_{k})\omega_{k}\Eset[\|\xhat_{k}\|^2]\notag\\ 
&\quad +   \frac{64B^2(1+B)^{6}}{\mu_{F}\mu_{G}}\Big(\frac{5\beta_{k}^3}{\mu_{F}\alpha_{k}^2}  + \beta_{k}^2 + \alpha_{k;\tau(\alpha_{k})}\beta_{k}\Big)\Eset[\|\zhat_{k}\|^2]\notag\\ 
&\quad + \frac{64B^2(1+B)^{6}}{\mu_{F}\mu_{G}}(\|y^{\star}\| + \|H(0)\| + 1)^2\Big(\frac{5\beta_{k}^3}{\mu_{F}\alpha_{k}^2}  + \beta_{k}^2 + \alpha_{k;\tau(\alpha_{k})}\beta_{k}\Big),
\end{align*}
which when adding to \eqref{lem:yhat:ineq} and using \eqref{notation:V} we obtain
\begin{align}
V_{k+1} &\leq (1-\mu_{G}\beta_{k})V_{k} + \Big(\mu_{G}\beta_{k} - \mu_{F}\alpha_{k} + \frac{B^2\beta_{k}}{\mu_{G}}\omega_{k}\Big)\omega_{k}\Eset[\|\xhat_{k}\|^2]\notag\\ 
&\quad + \frac{64B^2(1+B)^{6}}{\mu_{F}\mu_{G}}\Big(\frac{5\beta_{k}^3}{\mu_{F}\alpha_{k}^2}  + \beta_{k}^2 + \alpha_{k;\tau(\alpha_{k})}\beta_{k}\Big)\Eset[\|\zhat_{k}\|^2]\notag\\ 
&\quad + \frac{64B^2(1+B)^{6}}{\mu_{F}\mu_{G}}(\|y^{\star}\| + \|H(0)\| + 1)^2\Big(\frac{5\beta_{k}^3}{\mu_{F}\alpha_{k}^2}  + \beta_{k}^2 + \alpha_{k;\tau(\alpha_{k})}\beta_{k}\Big)\notag\\
&\quad + 18(1+B)^{4}(\alpha_{k}\beta_{k} + 10B\alpha_{k;\tau(\alpha_{k})}\beta_{k} + 3\beta_{k}^2)\Eset[\|\zhat_{k}\|^2] \notag\\
&\quad + 24(1+B)^{4}(\|y^{\star}\| + \|H(0)\| + 1)^2(\beta_{k}^2 + 7B\alpha_{k;\tau(\alpha_{k})}\beta_{k})\notag\\ 
&\leq (1-\mu_{G}\beta_{k})V_{k}  + \frac{64(1+B)^{8}}{\mu_{F}\mu_{G}}\Big(\frac{5\beta_{k}^3}{\mu_{F}\alpha_{k}^2}  + 2\beta_{k}^2 + 4\alpha_{k;\tau(\alpha_{k})}\beta_{k}\Big)\Eset[\|\zhat_{k}\|^2]\notag\\ 
&\quad + \frac{64(1+B)^{8}}{\mu_{F}\mu_{G}}(\|y^{\star}\| + \|H(0)\| + 1)^2\Big(\frac{5\beta_{k}^3}{\mu_{F}\alpha_{k}^2}  + 2\beta_{k}^2 + 4\alpha_{k;\tau(\alpha_{k})}\beta_{k}\Big)\notag\\
&\stackrel{\eqref{notation:CD}}{=} (1-\mu_{G}\beta_{k})V_{k}  + \frac{64(1+B)^{8}}{\mu_{F}\mu_{G}}\Big(\frac{5\beta_{k}^3}{\mu_{F}\alpha_{k}^2}  + 2\beta_{k}^2 + 4\alpha_{k;\tau(\alpha_{k})}\beta_{k}\Big)\Eset[\|\zhat_{k}\|^2]\notag\\ 
&\quad + \frac{5D_{2}}{2\mu_{F}\mu_{G}}\Big(\frac{5\beta_{k}^3}{\mu_{F}\alpha_{k}^2}  + 2\beta_{k}^2 + 4\alpha_{k;\tau(\alpha_{k})}\beta_{k}\Big),\label{thm:main:Eq1}
\end{align}
where the second inequality we use \eqref{thm:main:stepsize} to have
\begin{align*}
\mu_{G}\beta_{k} - \mu_{F}\alpha_{k} + \frac{B^2\beta_{k}}{\mu_{G}}\omega_{k} = \mu_{G}\beta_{k} - \mu_{F}\alpha_{k} + \frac{\mu_{F}\alpha_{k}}{2} \leq 0.    
\end{align*}
First, since $\beta_{k} = \beta_{0}/(k+1)$ and $\beta_{0} \geq 2/\mu_{G}$ we have
\begin{align*}
(k+1)^2(1 - \mu_{G}\beta_{k}) \leq (k+1)^2\frac{k-1}{k+1}\leq k^2.  
\end{align*}
Multiplying both sides of \eqref{thm:main:Eq1} by $(k+1)^2$ and using the preceding relation and  \eqref{lem:zhat:upperbound:ineq} we obtain
\begin{align}
&(k+1)^2V_{k+1} \leq k^2 V_{k} + \frac{5D_{2}+64D(1+B)^{8}}{2\mu_{F}\mu_{G}} \Big(\frac{5\beta_{k}^3}{\mu_{F}\alpha_{k}^2} + 2\beta_{k}\alpha_{k} + 4\beta_{k}\alpha_{k;\tau(\alpha_{k})}\Big)(k+1)^2\notag\\
&\quad \leq k^2 V_{k} + \frac{5D_{2}+64D(1+B)^{8}}{2\mu_{F}\mu_{G}}\Big(\frac{5\beta_{0}^3(k+1)^{1/3}}{\mu_{F}\alpha_{0}^2} + 2\beta_{0}\alpha_{0}(k+1)^{1/3} + \frac{4\beta_{0}\alpha_{0}\tau(\alpha_{k})(k+1)}{(k+1-\tau(\alpha_{k}))^{2/3}}\Big).  \label{thm:main:Eq2}  
\end{align}
By using \eqref{notation:K*} we have for all $k\geq \Kcal^*$
\begin{align*}
\frac{(k+1)}{(k+1-\tau(\alpha_{k}))}\leq 2(k+1)^{1/3}.     
\end{align*}
Thus, we obtain from \eqref{thm:main:Eq2} for all $k\geq \Kcal^*$
\begin{align*}
&(k+1)^2V_{k+1}\notag\\ 
&\quad \leq k^2 V_{k} + \frac{5D_{2}+64D(1+B)^{8}}{2\mu_{F}\mu_{G}}\Big(\frac{5\beta_{0}^3(k+1)^{1/3}}{\mu_{F}\alpha_{0}^2} + 2\beta_{0}\alpha_{0}(k+1)^{1/3} + 8\beta_{0}\alpha_{0}\tau(\alpha_{k})(k+1)^{1/3}\Big) \notag\\
&\quad \leq (\Kcal^*)^2V_{\Kcal^*} + \frac{5D_{2}+64D(1+B)^{8}}{2\mu_{F}\mu_{G}}\sum_{t=\Kcal^*}^{k}\Big(\frac{5\beta_{0}^3(t+1)^{1/3}}{\mu_{F}\alpha_{0}^2} + 2\beta_{0}\alpha_{0}(t+1)^{1/3} + 8\beta_{0}\alpha_{0}\tau(\alpha_{k})(k+1)^{1/3}\Big)\notag\\
&\quad \leq (\Kcal^*)^2V_{\Kcal^*} + \frac{5D_{2}+64D(1+B)^{8}}{2\mu_{F}\mu_{G}}\Big(\frac{5\beta_{0}^3(k+1)^{4/3}}{\mu_{F}\alpha_{0}^2} + 2\beta_{0}\alpha_{0}(k+1)^{4/3} + 6\beta_{0}\alpha_{0}\tau(\alpha_{k})(k+1)^{4/3}\Big),
\end{align*}
where we use the integral test to have
\begin{align*}
&\sum_{t=\Kcal^*}^{k}(t+1)^{1/3} \leq 1 +  \int_{t=0}^{k}(t+1)^{1/3}dt \leq \frac{1}{4} + \frac{3}{4}(k+1)^{4/3},\\
&\sum_{t=\Kcal^*}^{k}\tau(\alpha_{t})(t+1)^{1/3}\leq \int_{t=0}^{k}\tau(\alpha_{t})(t+1)^{1/3}dt\leq \frac{3}{4}\tau(\alpha_{k})(k+1)^{4/3}.
\end{align*}
Diving both sides of the equation above by (k+1) and using $\tau(\alpha_{k}) = \frac{2C}{3}\log((k+1)/\alpha_{0})$ yields \eqref{thm:main:ineq}.
\end{proof}



\section{Proof of Main Lemmas }\label{sec:lemmas_proofs}

In this section we present the analysis of the results presented in Lemmas \ref{lem:xhat}--\ref{lem:zhat:upperbound}. We require the following technical lemmas, whose proofs are presented in the appendix. 

\begin{lem}\label{lem:xhat_bias}
Suppose that Assumptions \ref{assump:smooth:FH}--\ref{assump:mixing} hold. Then for all $k\geq \Kcal^{\star}$ we have
\begin{align}
\Eset[-\xhat_{k}^T\psi_{k}] &\leq 9(1+B)^{4}\alpha_{k}\Eset[\|\zhat_{k}\|^2] + 180B^2(1+B)^{3}\alpha_{k;\tau(\alpha_{k})}\Eset[\|\zhat_{k}\|^2] \notag\\
&\quad + 156B(1+B)^{4}\alpha_{k;\tau(\alpha_{k})}(\|y^{\star}\| + \|H(0)\| + 1)^2.\label{lem:xhat_bias:ineq}
\end{align}
\end{lem}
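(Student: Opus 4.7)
The plan is to handle the biased Markovian noise via the standard mixing-time shift. Set $\tau := \tau(\alpha_k)$ and write
\begin{align*}
-\xhat_k^T \psi_k \;=\; -(\xhat_k - \xhat_{k-\tau})^T \psi_k \;-\; \xhat_{k-\tau}^T \psi_k,
\end{align*}
so that at time $k-\tau$ the quantities $\xhat_{k-\tau}$ and $(x_{k-\tau},y_{k-\tau})$ are $\Qcal_{k-\tau}$-measurable, while $\xi_k$ is drawn from a distribution that is $\alpha_k$-close in total variation to $\pi$ by Assumption \ref{assump:mixing}. To isolate that mixing bound, I would further decompose $\psi_k$ by inserting the shifted samples:
\begin{align*}
\psi_k \;=\; \bigl[F(x_k,y_k;\xi_k) - F(x_{k-\tau},y_{k-\tau};\xi_k)\bigr] + \bigl[F(x_{k-\tau},y_{k-\tau};\xi_k) - F(x_{k-\tau},y_{k-\tau})\bigr] + \bigl[F(x_{k-\tau},y_{k-\tau}) - F(x_k,y_k)\bigr].
\end{align*}
The middle bracket contributes the genuine bias, bounded after conditioning on $\Qcal_{k-\tau}$ by $\alpha_k\|\xhat_{k-\tau}\|$ via Assumption \ref{assump:mixing}, while the first and third brackets are controlled via Assumptions \ref{assump:smooth:FH} and \ref{assump:smooth:samples} by Lipschitz drifts $L_F(\|x_k - x_{k-\tau}\| + \|y_k - y_{k-\tau}\|)$.

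The main obstacle is controlling the drifts $\|x_k - x_{k-\tau}\|$, $\|y_k - y_{k-\tau}\|$, and $\|\xhat_k - \xhat_{k-\tau}\|$ in a way that reintroduces only $\|\zhat_k\|^2$ and the constants $\|y^\star\|, \|H(0)\|$. For this I plan to telescope the updates \eqref{alg:xy_noise}, using Lemma \ref{lem:FG_bounded} to bound each increment by $\alpha_t B(\|x_t\|+\|y_t\|+1)$ (and likewise for $y$ with $\beta_t$), together with the pointwise bound $\|x_t\| \le \|\xhat_t\| + L_H\|y_t\| + \|H(0)\| \le \|\zhat_t\| + B\|\yhat_t\| + B\|y^\star\| + \|H(0)\|$. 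The step-size calibration \eqref{notation:K*}, which enforces $\alpha_{k;\tau(\alpha_k)} \le \log(2)/(2B)$ for $k \ge \Kcal^\star$, is exactly what allows a Gr\"onwall-type propagation giving $\|\zhat_t\| \le c(\|\zhat_k\|+\|y^\star\|+\|H(0)\|+1)$ uniformly for $t \in [k-\tau,k]$, and hence drifts of order $B\alpha_{k;\tau(\alpha_k)}(\|\zhat_k\|+\|y^\star\|+\|H(0)\|+1)$.

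Plugging these estimates into the decomposition and taking expectations, each of the Lipschitz-drift contributions becomes a product of $\|\xhat_{k-\tau}\|$ with $B\alpha_{k;\tau(\alpha_k)}(\|\zhat_k\|+\|y^\star\|+\|H(0)\|+1)$, to which Young's inequality (with symmetric weights, since $\alpha_{k;\tau(\alpha_k)}$ is already a small factor) yields terms of order $B^2(1+B)^{p}\alpha_{k;\tau(\alpha_k)}\|\zhat_k\|^2$ and $B(1+B)^{p}\alpha_{k;\tau(\alpha_k)}(\|y^\star\|+\|H(0)\|+1)^2$. The mixing contribution $\alpha_k\|\xhat_{k-\tau}\|$ is likewise squared out using Young with the weight $\alpha_k$ on each side, producing an $\alpha_k(1+B)^{p}\|\zhat_k\|^2$ piece; any residual constant term pairing $\alpha_k$ with $(\|y^\star\|+\|H(0)\|+1)^2$ can be absorbed into the $\alpha_{k;\tau(\alpha_k)}$ summand since $\alpha_k \le \alpha_{k;\tau(\alpha_k)}$. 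Matching the explicit constants $9(1+B)^4$, $180B^2(1+B)^3$, and $156B(1+B)^4$ is a tedious but routine bookkeeping step; the genuine content of the lemma is the mixing-time decomposition and the drift control secured by \eqref{notation:K*}.
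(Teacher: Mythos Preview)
Your proposal is correct and follows essentially the same mixing-time shift strategy as the paper. The paper organizes the decomposition slightly differently---it writes
\[
-\xhat_k^T\psi_k = -\xhat_k^T(\psi_k-\psi_{k-\tau}) - (\xhat_k-\xhat_{k-\tau})^T\psi_{k-\tau} - \xhat_{k-\tau}^T\psi_{k-\tau},
\]
i.e.\ it shifts the noise index of $\psi$ itself rather than freezing the $(x,y)$ arguments inside $\psi_k$ as you do---but the substance is identical: one term is handled by Assumption~\ref{assump:mixing} after conditioning on $\Qcal_{k-\tau}$, and the remaining terms are controlled by the Lipschitz drifts $\|z_k-z_{k-\tau}\|$ and $\|\zhat_k-\zhat_{k-\tau}\|$. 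Your Gr\"onwall-type drift control using \eqref{notation:K*} is exactly the content of Lemmas~\ref{lem:z_bound} and~\ref{lem:zhat_bound}, which you could invoke directly to save the bookkeeping.
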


\begin{lem}\label{lem:yhat_bias}
Suppose that Assumptions \ref{assump:smooth:FH}--\ref{assump:mixing} hold. Then for all $k\geq \Kcal^{\star}$ we have
\begin{align}
\Eset[-\yhat_{k}^T\zeta_{k}] &\leq  9(1+B)^{4}\alpha_{k}\Eset[\|\zhat_{k}\|^2] + 180B^2(1+B)^{3}\alpha_{k;\tau(\alpha_{k})}\Eset[\|\zhat_{k}\|^2] \notag\\
&\quad + 156B(1+B)^{4}\alpha_{k;\tau(\alpha_{k})}(\|y^{\star}\| + \|H(0)\| + 1)^2.\label{lem:yhat_bias:ineq}
\end{align}
\end{lem}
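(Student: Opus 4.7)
The plan is to mirror the argument of Lemma~\ref{lem:xhat_bias} directly: the right-hand side of \eqref{lem:yhat_bias:ineq} is identical in form to that of \eqref{lem:xhat_bias:ineq}, with $(\yhat_k,\zeta_k,G)$ playing the roles of $(\xhat_k,\psi_k,F)$, so the same Markov-noise decomposition should apply line for line. Setting $\tau \triangleq \tau(\alpha_k)$, I will decompose
\begin{align*}
-\yhat_k^T\zeta_k
 &= \underbrace{-(\yhat_k-\yhat_{k-\tau})^T\zeta_k}_{T_1} \\
 &\quad + \underbrace{\yhat_{k-\tau}^T\!\bigl[G(x_{k-\tau},y_{k-\tau};\xi_k) - G(x_{k-\tau},y_{k-\tau}) - \zeta_k\bigr]}_{T_2} \\
 &\quad \underbrace{-\,\yhat_{k-\tau}^T\bigl(G(x_{k-\tau},y_{k-\tau};\xi_k) - G(x_{k-\tau},y_{k-\tau})\bigr)}_{T_3}.
\end{align*}
The purpose of the shift by $\tau$ is to decouple the random sample $\xi_k$ from the iterate indices so that $T_3$ can be handled by the mixing property of Assumption~\ref{assump:mixing}.

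For $T_1$, I will unroll the $y$-update over $[k-\tau,k-1]$ and apply Lemma~\ref{lem:FG_bounded} to obtain a bound of the form $\|\yhat_k-\yhat_{k-\tau}\| \le B\sum_{t=k-\tau}^{k-1}\beta_t(\|\zhat_t\|+\|y^\star\|+\|H(0)\|+1)$; since $\beta_t \le \alpha_t$ and $\alpha_{k;\tau(\alpha_k)} \le \log(2)/(2B)$ for $k\ge\Kcal^\star$ by \eqref{notation:K*}, a short Gr\"onwall-type argument shows $\|\zhat_t\| \le 2\|\zhat_k\|+O(\|y^\star\|+\|H(0)\|+1)$ across the window, after which Young's inequality combined with $\|\zeta_k\| \le 2B(\|\zhat_k\|+\|y^\star\|+\|H(0)\|+1)$ produces the $\alpha_{k;\tau(\alpha_k)}\Eset[\|\zhat_k\|^2]$ and $\alpha_{k;\tau(\alpha_k)}(\|y^\star\|+\|H(0)\|+1)^2$ pieces. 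For $T_2$, the sample-path Lipschitz condition (Assumption~\ref{assump:smooth:samples}) bounds the bracket by $2L_G(\|x_k-x_{k-\tau}\|+\|y_k-y_{k-\tau}\|)$, which is controlled by exactly the same unrolling argument and yields contributions of the same order. For $T_3$, I will condition on $\Qcal_{k-\tau}$, under which $x_{k-\tau},y_{k-\tau},\yhat_{k-\tau}$ are measurable, and invoke Assumption~\ref{assump:mixing} to obtain $\bigl\|\Eset[G(x_{k-\tau},y_{k-\tau};\xi_k) - G(x_{k-\tau},y_{k-\tau})\mid\Qcal_{k-\tau}]\bigr\| \le \alpha_k$, so Cauchy--Schwarz produces the $\alpha_k\Eset[\|\zhat_k\|^2]$ term.

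The main obstacle is bookkeeping rather than any new idea: one must consistently convert norms of the raw iterates $(x_t,y_t)$ at intermediate times $t\in[k-\tau,k]$ into norms of the residual $\zhat_k$, absorbing extra additive $(\|y^\star\|+\|H(0)\|+1)$ factors via the triangle inequality and $\|H(y)\| \le L_H\|y\| + \|H(0)\|$. Relation \eqref{notation:K*} is precisely what keeps the Gr\"onwall-type factors uniformly bounded across the mixing window and thereby fixes the specific numerical coefficients. Since every step of this argument mirrors the $F$-side derivation of Lemma~\ref{lem:xhat_bias} term-for-term, the resulting constants $9$, $180$, and $156$, together with the exponents on $(1+B)$, will match those of \eqref{lem:xhat_bias:ineq} exactly.
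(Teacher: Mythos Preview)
Your plan is sound and will deliver \eqref{lem:yhat_bias:ineq}, but the decomposition you wrote down is not the one the paper (or its Lemma~\ref{lem:xhat_bias}) actually uses. The paper splits as
\[
-\yhat_k^T\zeta_k \;=\; -\yhat_k^T(\zeta_k - \zeta_{k-\tau}) \;-\; (\yhat_k - \yhat_{k-\tau})^T\zeta_{k-\tau} \;-\; \yhat_{k-\tau}^T\zeta_{k-\tau},
\]
where $\zeta_{k-\tau} = G(x_{k-\tau},y_{k-\tau};\xi_{k-\tau}) - G(x_{k-\tau},y_{k-\tau})$ shifts \emph{both} the iterate index and the sample index, and then it invokes the pre-packaged drift bounds of Lemmas~\ref{lem:z_bound}--\ref{lem:psi_zeta_bound} rather than re-unrolling the updates as you propose. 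The resulting proof is literally the Lemma~\ref{lem:xhat_bias} argument with $(\yhat,\zeta)$ substituted for $(\xhat,\psi)$, which is why the constants line up exactly. Your variant instead freezes the sample at $\xi_k$ and shifts only the iterate; that is an equally valid shift-and-mix scheme, and in some respects cleaner: your $T_2$ is an iterate difference at a single fixed sample, so Assumption~\ref{assump:smooth:samples} applies directly, and your $T_3$ sees the chain run a full $\tau(\alpha_k)$ steps past the conditioning $\sigma$-algebra, making the invocation of Assumption~\ref{assump:mixing} transparent. Both routes produce the same three contributions of the same orders in $\alpha_k$ and $\alpha_{k;\tau(\alpha_k)}$; the paper's buys you a near-verbatim reuse of Lemmas~\ref{lem:z_bound}--\ref{lem:psi_zeta_bound}, while yours has the advantage that every Lipschitz step happens at a single fixed $\xi$.
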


\begin{lem}\label{lem:z_bound}
Suppose that Assumptions \ref{assump:smooth:FH}--\ref{assump:mixing} hold. Then for all $k\geq \Kcal^{\star}$ we have
\begin{align}
&\|z_{k} - z_{k-\tau(\alpha_{k})}\| \leq 4B\alpha_{k;\tau(\alpha_{k})}(\|z_{k-\tau(\alpha_{k})}\| + 1).\label{lem:z_bound:ineq1}\\
&\|z_{k} - z_{k-\tau(\alpha_{k})}\| \leq  12B\alpha_{k;\tau(\alpha_{k})}(\|z_{k}\| + 1).\label{lem:z_bound:ineq2}
\end{align}
\end{lem}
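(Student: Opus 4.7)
The plan is to prove both bounds pathwise by combining a one-step drift estimate from Lemma \ref{lem:FG_bounded} with telescoping and a discrete Gronwall argument, and then to deduce \eqref{lem:z_bound:ineq2} from \eqref{lem:z_bound:ineq1} by a short algebraic rearrangement. No properties of $F,G$ beyond the linear-growth bound of Lemma \ref{lem:FG_bounded} and the mixing-time step-size calibration \eqref{notation:K*} should be needed; the statement is purely deterministic along any realization.

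The starting ingredient is the one-step drift. From \eqref{alg:xy}, the triangle inequality, Lemma \ref{lem:FG_bounded}, the assumption $\beta_t\leq\alpha_t$, and an elementary comparison $\|x_t\|+\|y_t\|+1\leq c_1(\|z_t\|+1)$ coming from the definition of $z$ in \eqref{notation:z_zhat}, one obtains
\[
\|z_{t+1}-z_t\| \leq \alpha_t\|F(x_t,y_t;\xi_t)\| + \beta_t\|G(x_t,y_t;\xi_t)\| \leq c_0 B\alpha_t(\|z_t\|+1)
\]
for an explicit absolute constant $c_0$. This is the only point at which the updates \eqref{alg:xy} actually enter the argument; everything downstream is a deterministic recursion.

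For \eqref{lem:z_bound:ineq1}, I would telescope across the window $[k-\tau(\alpha_k),k]$:
\[
\|z_k-z_{k-\tau(\alpha_k)}\| \leq \sum_{t=k-\tau(\alpha_k)}^{k-1}\|z_{t+1}-z_t\| \leq c_0B\sum_{t=k-\tau(\alpha_k)}^{k-1}\alpha_t\,(\|z_t\|+1),
\]
and then replace $\|z_t\|+1$ by $\|z_{k-\tau(\alpha_k)}\|+1$ via a discrete Gronwall step: setting $u_t=\|z_t\|+1$, the one-step drift gives $u_{t+1}\leq(1+c_0B\alpha_t)u_t$, hence for every $t$ in the window
\[
u_t \leq \exp\!\Big(c_0B\!\!\sum_{s=k-\tau(\alpha_k)}^{t-1}\!\!\alpha_s\Big)u_{k-\tau(\alpha_k)} \leq \exp\!\big(c_0B\,\alpha_{k;\tau(\alpha_k)}\big)u_{k-\tau(\alpha_k)}.
\]
The step-size calibration \eqref{notation:K*}, namely $\alpha_{k;\tau(\alpha_k)}\leq \log(2)/(2B)$ for $k\geq\Kcal^\ast$, bounds this exponential by a small absolute constant; pulling $u_{k-\tau(\alpha_k)}$ out of the sum over $t$ and absorbing the constants into the leading coefficient then yields \eqref{lem:z_bound:ineq1} with the stated constant $4B$.

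For \eqref{lem:z_bound:ineq2}, I would substitute the trivial inequality $\|z_{k-\tau(\alpha_k)}\|\leq \|z_k\|+\|z_k-z_{k-\tau(\alpha_k)}\|$ into \eqref{lem:z_bound:ineq1} and rearrange, obtaining
\[
(1-4B\alpha_{k;\tau(\alpha_k)})\,\|z_k-z_{k-\tau(\alpha_k)}\| \leq 4B\alpha_{k;\tau(\alpha_k)}(\|z_k\|+1).
\]
Since \eqref{notation:K*} keeps $4B\alpha_{k;\tau(\alpha_k)}$ bounded strictly away from $1$, the prefactor $(1-4B\alpha_{k;\tau(\alpha_k)})^{-1}$ is a controlled absolute constant, and numerical bookkeeping pins down the coefficient $12B$ in \eqref{lem:z_bound:ineq2}. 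The main obstacle in this whole plan is exactly the tight constant tracking: the multiplicative constants coming from the comparison $\|x\|+\|y\|$ vs.\ $\|z\|$, the collapse $\alpha_t+\beta_t\leq 2\alpha_t$, and the Gronwall exponential must each be threaded carefully through the telescoping and the rearrangement so that the final coefficients land exactly at $4B$ and $12B$; there is no conceptual difficulty beyond this, and in particular no stochastic, Lipschitz, or monotonicity input on $F,G$ is required beyond Lemma \ref{lem:FG_bounded}.
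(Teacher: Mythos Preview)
Your proposal is correct and mirrors the paper's own argument: a one-step linear-growth bound from Lemma~\ref{lem:FG_bounded}, a discrete Gronwall step over the window $[k-\tau(\alpha_k),k]$ using $1+x\leq e^{x}$ together with \eqref{notation:K*}, then a telescoping sum to obtain \eqref{lem:z_bound:ineq1}, and finally the triangle-inequality rearrangement (with $4B\alpha_{k;\tau(\alpha_k)}\leq 2\log 2\leq 2/3$) to get \eqref{lem:z_bound:ineq2}. The only cosmetic difference is that the paper iterates the bound on $\|z_t\|$ directly before telescoping increments, whereas you phrase it via $u_t=\|z_t\|+1$; the logic and constants are the same.
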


\begin{lem}\label{lem:zhat_bound}
Suppose that Assumptions \ref{assump:smooth:FH}--\ref{assump:mixing} hold. Then for all $k\geq \Kcal^{\star}$ we have
\begin{align}
&\|\zhat_{k}-\zhat_{k-\tau(\alpha_{k})}\| \leq 4B(1+B)^2\alpha_{k;\tau(\alpha_{k})}\big(\|\zhat_{k-\tau(\alpha_{k})}\| + \|y^{\star}\| + \|H(0)\|+1\big).\label{lem:zhat_bound:ineq1}\\
&\|\zhat_{k}-\zhat_{k-\tau(\alpha_{k})}\| \leq 12B(1+B)^2\alpha_{k;\tau(\alpha_{k})}\big(\|\zhat_{k}\|+ \|y^{\star}\| + \|H(0)\|+1\big).\label{lem:zhat_bound:ineq2}\\
&\|\zhat_{k}-\zhat_{k-\tau(\alpha_{k})}\|^2 \leq 288B^2(1+B)^4\alpha_{k;\tau(\alpha_{k})}^2 \big(\|\zhat_{k}\|^2 +(\|y^{\star}\| + \|H(0)\|+1)^2\big).\label{lem:zhat_bound:ineq3}
\end{align}
\end{lem}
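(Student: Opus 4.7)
The strategy is to lift the two bounds in Lemma \ref{lem:z_bound}, which are stated in the $z$-coordinates, to corresponding bounds on the residual $\zhat$, using only the Lipschitz continuity of $H$ (whose constant $L_{H}$ is dominated by $B$, see \eqref{lem:FG_bounded:B}). Both \eqref{lem:zhat_bound:ineq1} and \eqref{lem:zhat_bound:ineq2} will be obtained by the same translation argument applied to the two forms of Lemma \ref{lem:z_bound}, while \eqref{lem:zhat_bound:ineq3} will be a one-line corollary of \eqref{lem:zhat_bound:ineq2}.

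The argument hinges on two purely algebraic observations. First, the definition \eqref{notation:z_zhat} gives
\begin{align*}
\zhat_{k} - \zhat_{k-\tau(\alpha_{k})} = \begin{bmatrix}(x_{k} - x_{k-\tau(\alpha_{k})}) - (H(y_{k}) - H(y_{k-\tau(\alpha_{k})}))\\ y_{k} - y_{k-\tau(\alpha_{k})}\end{bmatrix},
\end{align*}
so the triangle inequality combined with the $B$-Lipschitzness of $H$ yields $\|\zhat_{k} - \zhat_{k-\tau(\alpha_{k})}\| \leq c_{1}(1+B)\|z_{k} - z_{k-\tau(\alpha_{k})}\|$ for an absolute constant $c_{1}$. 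Second, using $x = \xhat + H(y)$, $y = \yhat + y^{\star}$ and $\|H(y)\| \leq B\|y\| + \|H(0)\|$, one obtains the reverse pointwise translation
\begin{align*}
\|z\| + 1 \leq c_{2}(1+B)\big(\|\zhat\| + \|y^{\star}\| + \|H(0)\| + 1\big).
\end{align*}

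Chaining these two observations with Lemma \ref{lem:z_bound} completes the proof. To derive \eqref{lem:zhat_bound:ineq1}, apply the first observation to \eqref{lem:z_bound:ineq1} and then the second at the index $k - \tau(\alpha_{k})$; the resulting prefactor $(1+B)^{2}$ (versus $B$ in Lemma \ref{lem:z_bound}) is precisely the product of the two $(1+B)$'s contributed by the two translations, giving a bound of the form $CB(1+B)^{2}\alpha_{k;\tau(\alpha_{k})}(\|\zhat_{k-\tau(\alpha_{k})}\| + \|y^{\star}\| + \|H(0)\| + 1)$. To derive \eqref{lem:zhat_bound:ineq2}, use \eqref{lem:z_bound:ineq2} together with the second observation at index $k$. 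Finally, \eqref{lem:zhat_bound:ineq3} follows by squaring \eqref{lem:zhat_bound:ineq2} and applying $(a+b)^{2} \leq 2(a^{2}+b^{2})$ with $a = \|\zhat_{k}\|$ and $b = \|y^{\star}\| + \|H(0)\| + 1$; this is exactly what turns the $12^{2}=144$ of (ii) into the $288$ of (iii). There is no genuine obstacle in the argument; the only care needed is to track the additive shift $\|y^{\star}\| + \|H(0)\|$, which is the ``size of the origin'' in $\zhat$-coordinates, and the multiplicative factor $1+B$ coming from $L_{H}\le B$.
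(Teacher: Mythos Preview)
Your proposal is correct and follows essentially the same route as the paper: it too establishes the two translation inequalities $\|\zhat_{k}-\zhat_{k-\tau(\alpha_{k})}\|\le (1+B)\|z_{k}-z_{k-\tau(\alpha_{k})}\|$ and $\|z\|+1\le (1+B)(\|\zhat\|+\|y^{\star}\|+\|H(0)\|+1)$, chains them with the two parts of Lemma~\ref{lem:z_bound}, and squares \eqref{lem:zhat_bound:ineq2} with $(a+b)^2\le 2a^2+2b^2$ to obtain \eqref{lem:zhat_bound:ineq3}. The only cosmetic difference is that the paper tracks the constants explicitly (both $c_{1}$ and $c_{2}$ turn out to be $1$), so that the $4$, $12$, and $288$ in the statement come out exactly.
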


\begin{lem}\label{lem:psi_zeta_bound}
Let Assumptions \ref{assump:smooth:FH}--\ref{assump:G:smooth} hold. Then we have
\begin{align}
\begin{aligned}
\|\psi_{k}\| &\leq 2B(1+B)(\|\zhat_{k}\| +\|y^{\star}\| + \|H(0)\| + 1),\\
\|\zeta_{k}\| &\leq 2B(1+B)(\|\zhat_{k}\| +\|y^{\star}\| + \|H(0)\| + 1).
\end{aligned}\label{lem:psi_zeta_bound:ineq}
\end{align}
\end{lem}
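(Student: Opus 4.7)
The strategy is straightforward triangle inequality followed by bookkeeping, since both $\psi_k$ and $\zeta_k$ are just the noise-minus-mean forms of $F$ and $G$, which Lemma~\ref{lem:FG_bounded} already controls.

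I would start from the definition $\psi_k = F(x_k,y_k;\xi_k) - F(x_k,y_k)$, apply the triangle inequality, and invoke Lemma~\ref{lem:FG_bounded} on each term to get
\[
\|\psi_k\| \;\le\; \|F(x_k,y_k;\xi_k)\| + \|F(x_k,y_k)\| \;\le\; 2B\bigl(\|x_k\|+\|y_k\|+1\bigr).
\]
The only remaining task is to bound $\|x_k\|+\|y_k\|+1$ in terms of $\|\zhat_k\|$ and the constants $\|y^\star\|, \|H(0)\|$.

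To re-express the iterates via the residuals, I would use the decompositions $x_k = \xhat_k + H(y_k)$ and $y_k = \yhat_k + y^\star$ from \eqref{alg:xyhat}. Then Lipschitz continuity of $H$ (Assumption~\ref{assump:smooth:FH}, with $L_H \le B$ by \eqref{lem:FG_bounded:B}) gives
\[
\|H(y_k)\| \;\le\; \|H(y_k)-H(0)\| + \|H(0)\| \;\le\; B\|y_k\| + \|H(0)\| \;\le\; B\|\yhat_k\| + B\|y^\star\| + \|H(0)\|.
\]
Combining these, and using $\|\xhat_k\|, \|\yhat_k\| \le \|\zhat_k\|$ (or more tightly $\|\xhat_k\|+\|\yhat_k\| \le \|\zhat_k\|$ under the stacked-vector convention implicit in \eqref{notation:z_zhat}), I would arrive at
\[
\|x_k\|+\|y_k\|+1 \;\le\; \|\xhat_k\| + (B{+}1)\|\yhat_k\| + (B{+}1)\|y^\star\| + \|H(0)\| + 1 \;\le\; (1+B)\bigl(\|\zhat_k\| + \|y^\star\| + \|H(0)\| + 1\bigr),
\]
which plugged into the displayed bound on $\|\psi_k\|$ yields the claimed estimate $\|\psi_k\| \le 2B(1+B)(\|\zhat_k\|+\|y^\star\|+\|H(0)\|+1)$.

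The bound for $\|\zeta_k\|$ is obtained by the identical argument, replacing $F$ by $G$ and using the $G$-half of Lemma~\ref{lem:FG_bounded}; no new idea is needed since neither the strong monotonicity of $F$ nor that of $G$ enters. There is no real obstacle here; the only subtlety is the constant bookkeeping to pull out exactly $(1+B)$ rather than $(B+2)$ or $\sqrt{2}(1+B)$ on the residual side, which forces the use of the stacked-norm inequality $\|\xhat_k\|+\|\yhat_k\| \le \|\zhat_k\|$ consistent with the rest of the paper's estimates.
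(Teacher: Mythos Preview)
Your proposal is correct and follows essentially the same route as the paper: the paper bounds $\|\psi_k\|\le 2B(\|z_k\|+1)$ via Lemma~\ref{lem:FG_bounded} and then invokes the already-established estimate $\|z_k\|\le (1+B)\|\zhat_k\|+(1+B)(\|y^\star\|+\|H(0)\|)$ (equation~\eqref{lem:zhat_bound:eq1} from the proof of Lemma~\ref{lem:zhat_bound}), which is exactly the inequality you re-derive inline. Your identification of the norm convention $\|\xhat_k\|+\|\yhat_k\|=\|\zhat_k\|$ is also consistent with the paper's usage elsewhere (e.g.\ equation~\eqref{lem:zhat_bound:eq2a}).
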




\subsection{Proof of Lemma \ref{lem:xhat}}
\begin{proof}
Recall that $\xhat_{k} = x_{k} - H(y_{k})$. By \eqref{alg:xyhat} and \eqref{alg:xy_noise} we have
\begin{align}
\|\xhat_{k+1}\|^2 &= \|x_{k+1} - H(y_{k+1})\|^2 = \|x_{k} - \alpha_{k} (F(x_{k},y_{k}) +\psi_{k}) - H(y_{k+1})\|^2\notag\\
&= \|\xhat_{k} - \alpha_{k}F(x_{k},y_{k})  + (H(y_{k})-H(y_{k+1})) - \alpha_{k}\psi_{k}\|^2\notag\\ 
&= \|\xhat_{k} - \alpha_{k}F(x_{k},y_{k})\|^2 + \|H(y_{k})-H(y_{k+1}) - \alpha_{k}\psi_{k}\|^2  \notag\\ 
&\quad + 2(\xhat_{k} - \alpha_{k}F(x_{k},y_{k}))^T(H(y_{k})-H(y_{k+1}))  - 2\alpha_{k}\psi_{k}^T(\xhat_{k} - \alpha_{k}F(x_{k},y_{k})).\label{lem:xhat:eq1} 
\end{align}
We next analyze each term on the right-hand side of \eqref{lem:xhat:eq1}. First, 
since  $F(H(y_{k}),y_{k}) = 0$ (by Assumption \ref{assump:smooth:FH}) and by \eqref{assump:sm:F:ineq} and \eqref{lem:FG_bounded:B} we consider 
\begin{align}
&\left(\xhat_{k} - \alpha_{k}F(x_{k},y_{k})\right)^2 = \left(x_{k} - H(y_{k}) - \alpha_{k}F(x_{k},y_{k})\right)^2\notag\\ 
&\quad = \|\xhat_{k}\|^2 -2\alpha_{k}(x_{k} - H(y_{k}))^TF(x_{k},y_{k}) + \alpha_{k}^2\|F(x_{k},y_{k})\|^2\notag\\ 
&\quad = \|\xhat_{k}\|^2 -2\alpha_{k}(x_{k} - H(y_{k}))^T(F(x_{k},y_{k}) - F(H(y_{k}),y_{k}))  + \alpha_{k}^2\|F(x_{k},y_{k})- F(H(y_{k}),y_{k})\|^2\notag\\
&\quad \stackrel{\eqref{assump:sm:F:ineq}}{\leq} \|\xhat_{k}\|^2 -2\mu_{F}\alpha_{k}\|x_{k} - H(y_{k})\|^2 + L_{F}^2\alpha_{k}^2 \|x_{k} - H(y_{k})\|^2\notag\\
&\quad \stackrel{\eqref{lem:FG_bounded:B}}{\leq} (1-2\mu_{F}\alpha_{k}+B^2\alpha_{k}^2)\|\xhat_{k}\|^2,\label{lem:xhat:eq1a}
\end{align}
where in the first inequality we also use \eqref{assump:smooth:FH:ineqF}. Second, by  \eqref{assump:smooth:FH:ineqH}, \eqref{assump:G:smooth}, \eqref{lem:FG_bounded:B}, and $G(H(y^{\star}),y^{\star}) = 0$ we consider
\begin{align}
\|G(x_{k},y_{k})\|^2 &\leq \big(\|G(x_{k},y_{k}) - G(H(y_{k}),y_{k})\| +  \|G(H(y_{k}),y_{k}) - G(H(y^{\star}),y^{\star})\|\big)^2\notag\\
&\leq \left(B\|\xhat_{k}\| + B\left(\|H(y_{k}) - H(y^{\star})\|+ \|\yhat_{k}\|\right)\right)^2\leq \left(B\|\xhat_{k}\| + B(B+1) \|\yhat_{k}\|\right)^2\notag\\
&\leq 2B^2\|\xhat_{k}\|^2 + 2B^2(B+1)^2\|\yhat_{k}\|^2.\label{lem:xhat:eq1b1}
\end{align}
Using \eqref{assump:smooth:FH:ineqH}, \eqref{alg:xy}, \eqref{lem:FG_bounded:B}, and \eqref{lem:xhat:eq1b1} we consider the second term on the right-hand side of \eqref{lem:xhat:eq1}
\begin{align}
&\|H(y_{k})-H(y_{k+1}) - \alpha_{k}\psi_{k}\|^2 \leq 2\|H(y_{k})-H(y_{k+1})\|^2  + 2\alpha_{k}^2\| \psi_{k}\|^2\notag\\
&\quad \stackrel{\eqref{lem:FG_bounded:B}}{\leq} 2B^2\|y_{k+1}-y_{k}\|^2 + 2\alpha_{k}^2\| \psi_{k}\|^2 = 2B^2\beta_{k}^2\|G(x_{k},y_{k}) + \zeta_{k}\|^2 + 2\alpha_{k}^2\|\psi_{k}\|^2\notag\\ 
&\quad  \leq  4B^2\beta_{k}^2\|G(x_{k},y_{k})\|^2 + 4B^2\beta_{k}^2\|\zeta_{k}\|^2 + 2\alpha_{k}^2\|\psi_{k}\|^2\notag\\
&\quad \stackrel{\eqref{lem:xhat:eq1b1}}{\leq} 8B^{4}\beta_{k}^2\|\xhat_{k}\|^2 + 8B^{4}(B+1)^2\beta_{k}^2\|\yhat_{k}\|^2  + 4B^2\beta_{k}^2\|\zeta_{k}\|^2 + 2\alpha_{k}^2\|\psi_{k}\|^2.\label{lem:xhat:eq1b}
\end{align}
Third, applying the same line of analysis as in \eqref{lem:xhat:eq1b1} we obtain 
\begin{align*}
\|G(x_{k},y_{k})\| &\leq \big(\|G(x_{k},y_{k}) - G(H(y_{k}),y_{k})\| +  \|G(H(y_{k}),y_{k}) - G(H(y^{\star}),y^{\star})\|\big)\notag\\
&\leq \left(B\|\xhat_{k}\| + B\left(\|H(y_{k}) - H(y^{\star})\|+ \|\yhat_{k}\|\right)\right)\leq B\|\xhat_{k}\| + B(B+1) \|\yhat_{k}\|.
\end{align*}
Using the preceding relation, \eqref{assump:smooth:FH:ineqH}, and $F(H(y_{k}),y_{k}) = 0$ we consider
\begin{align*}
&(\xhat_{k} - \alpha_{k}F(x_{k},y_{k}))^T(H(y_{k})-H(y_{k+1})) \leq (\|\xhat_{k}\| + \alpha_{k}\|F(x_{k},y_{k})\|)\,\|H(y_{k})-H(y_{k+1})\|\\
&\quad \leq B\beta_{k}(\|\xhat_{k}\| + \alpha_{k}\|F(x_{k},y_{k})\|)\|y_{k+1} - y_{k}\|\notag\\
&\quad = B\beta_{k}(\|\xhat_{k}\| + \alpha_{k}\|F(x_{k},y_{k})-F(H(y_{k}),y_{k})\|)\|G(x_{k},y_{k})+\zeta_{k}\|\notag\\ 
&\quad \leq B\beta_{k}(\|\xhat_{k}\| + B\alpha_{k}\|\xhat_{k}\|)(\|G(x_{k},y_{k})\|+\|\zeta_{k}\|)\notag\\ 
&\quad \leq B(1+B\alpha_{k})\beta_{k}\|\xhat_{k}\|(B\|\xhat_{k}\| + B(B+1) \|\yhat_{k}\| +\|\zeta_{k}\|),
\end{align*}
which by using the relation $2ab \leq a^2/\eta + \eta b^2$ for any $\eta > 0$ yields
\begin{align}
&(\xhat_{k} - \alpha_{k}F(x_{k},y_{k}))^T(H(y_{k})-H(y_{k+1}))\notag\\ 
&\quad \leq \frac{\mu_{F}}{2}\alpha_{k}\|\xhat_{k}\|^2 +  \frac{2B^2(1+B\alpha_{k})^2\beta_{k}^2}{\mu_{F}\alpha_{k}}(B\|\xhat_{k}\| + B(B+1) \|\yhat_{k}\| + \|\zeta_{k}\|)^2\notag\\ 
&\quad \leq \frac{\mu_{F}}{2}\alpha_{k}\|\xhat_{k}\|^2 +  \frac{4B^2(1+B\alpha_{k})^2\beta_{k}^2}{\mu_{F}\alpha_{k}}(B\|\xhat_{k}\| + B(B+1) \|\yhat_{k}\|)^2 + \frac{4B^2(1+B\alpha_{k})^2\beta_{k}^2}{\mu_{F}\alpha_{k}}\|\zeta_{k}\|^2\notag\\ 
&\quad \leq \frac{\mu_{F}}{2}\alpha_{k}\|\xhat_{k}\|^2 +  \frac{16B^4(B+1)^2\beta_{k}^2}{\mu_{F}\alpha_{k}} \|\zhat_{k}\|^2 + \frac{16B^2\beta_{k}^2}{\mu_{F}\alpha_{k}}\|\zeta_{k}\|^2,\label{lem:xhat:eq1c}
\end{align}
where in the last inequality we use \eqref{notation:K*} to have $1+B\alpha_{k} \leq 2$. Finally, using \eqref{lem:FG_bounded:B} and $F(H(y_{k}),y_{k}) = 0$ we obtain 
\begin{align}
2\alpha_{k}^2\psi_{k}^TF(x_{k},y_{k}) &\leq \alpha_{k}^2\|\psi_{k}\|^2 + \alpha_{k}^2\|F(x_{k},y_{k})\|^2 = \alpha_{k}^2\|\psi_{k}\|^2 + \alpha_{k}^2\|F(x_{k},y_{k}) - F(H(y_{k}),y_{k})\|^2\notag\\
&\leq \alpha_{k}^2\|\psi_{k}\|^2 + B^2\alpha_{k}^2\|\xhat_{k}\|^2.
\label{lem:xhat:eq1d}
\end{align}
Taking the expectation on \eqref{lem:xhat:eq1} and using the preceding relations \eqref{lem:xhat:eq1a}--\eqref{lem:xhat:eq1d} yields
\begin{align}
\Eset[\|\xhat_{k+1}\|^2] &\leq  (1-2\mu_{F}\alpha_{k}+B^2\alpha_{k}^2)\Eset[\|\xhat_{k}\|^2]  - 2\alpha_{k}\Eset[\psi_{k}^{T}\xhat_{k}] + \alpha_{k}^2\Eset[\|\psi_{k}\|^2] + B^2\alpha_{k}^2\Eset[\|\xhat_{k}\|^2] \notag\\
&\quad + 8B^{4}\beta_{k}^2\Eset[\|\xhat_{k}\|^2] + 8B^{4}(B+1)^2\beta_{k}^2\Eset[\|\yhat_{k}\|^2]  + 4B^2\beta_{k}^2\Eset[\|\zeta_{k}\|^2] + 2\alpha_{k}^2\Eset[\|\psi_{k}\|^2]\notag\\
&\quad + \frac{\mu_{F}}{2}\alpha_{k}\Eset[\|\xhat_{k}\|^2] +  \frac{16B^4(B+1)^2\beta_{k}^2}{\mu_{F}\alpha_{k}}\Eset[ \|\zhat_{k}\|^2] + \frac{16B^2\beta_{k}^2}{\mu_{F}\alpha_{k}}\Eset[\|\zeta_{k}\|^2]\notag\\
&\leq (1-\mu_{F}\alpha_{k})\Eset[\|\xhat_{k}\|^2]  - 2\alpha_{k}\Eset[\psi_{k}^{T}\xhat_{k}]  + 3\alpha_{k}^2\Eset[\|\psi_{k}\|^2] + \Big(\frac{16B^2\beta_{k}^2}{\mu_{F}\alpha_{k}}  + 4B^2\beta_{k}^2\Big)\Eset[\|\zeta_{k}\|^2]\notag\\  
&\quad + B^2\Big(\alpha_{k}^2 + 8B^2\beta_{k}^2\Big)\Eset[\|\xhat_{k}\|^2] + 8B^{4}(B+1)^2\beta_{k}^2\Eset[\|\yhat_{k}\|^2]\notag\\  
&\quad +  \frac{16B^4(B+1)^2\beta_{k}^2}{\mu_{F}\alpha_{k}}\Eset[ \|\zhat_{k}\|^2].\label{lem:xhat:eq2}
\end{align}
By \eqref{lem:psi_zeta_bound:ineq} we have
\begin{align*}
\|\psi_{k}\|^2 &\leq 8B^2(1+B)^2\|\zhat_{k}\|^2 + 8B^2(1+B)^2(\|y^{\star}\| + \|H(0)\| + 1)^2,\\
\|\zeta_{k}\|^2 &\leq 8B^2(1+B)^2\|\zhat_{k}\|^2 + 8B^2(1+B)^2(\|y^{\star}\| + \|H(0)\| + 1)^2,   
\end{align*}
which by using \eqref{lem:xhat_bias:ineq} and $\alpha_{k}\leq \alpha_{k;\tau(\alpha_{k})}$ gives
\begin{align*}
&- 2\alpha_{k}\Eset[\psi_{k}^{T}\xhat_{k}]   + 3\alpha_{k}^2\Eset[\|\psi_{k}\|^2] + \Big(\frac{16B^2\beta_{k}^2}{\mu_{F}\alpha_{k}}  + 4B^2\beta_{k}^2\Big)\Eset[\|\zeta_{k}\|^2]\notag\\
&\quad\leq 18(1+B)^{4}\alpha_{k}^2\|\zhat_{k}\|^2 + 360B^2(1+B)^{3}\alpha_{k;\tau(\alpha_{k})}\alpha_{k}\|\zhat_{k}\|^2 \notag\\
&\quad\quad + 312B(1+B)^{4}(\|y^{\star}\| + \|H(0)\| + 1)^2\alpha_{k;\tau(\alpha_{k})}\alpha_{k}\notag\\
&\quad\quad  + 8B^2(1+B)^2\Big(\frac{16B^2\beta_{k}^2}{\mu_{F}\alpha_{k}}  + 4B^2\beta_{k}^2 + 3\alpha_{k}^2\Big)\|\zhat_{k}\|^2\notag\\ 
&\quad\quad + 8B^2(1+B)^2(\|y^{\star}\| + \|H(0)\| + 1)^2\Big(\frac{4B^2(1+B\alpha_{k})^2\beta_{k}^2}{\mu_{F}\alpha_{k}}  + 4B^2\beta_{k}^2 + 3\alpha_{k}^2\Big)\notag\\
&\quad \leq 8(1+B)^4\Big(\frac{16B^2\beta_{k}^2}{\mu_{F}\alpha_{k}}  + 4B^2\beta_{k}^2 + (45B+ 6)\alpha_{k;\tau(\alpha_{k})}\alpha_{k}\Big)\|\zhat_{k}\|^2\notag\\ 
&\quad\quad + 8(1+B)^{4}(\|y^{\star}\| + \|H(0)\| + 1)^2\Big(\frac{16B^2\beta_{k}^2}{\mu_{F}\alpha_{k}}  + 4B^2\beta_{k}^2 + (39B + 3)\alpha_{k}\alpha_{k;\tau(\alpha_{k})}\Big).
\end{align*}
Substituting the preceding relation into \eqref{lem:xhat:eq2} gives
\begin{align*}
\Eset[\|\xhat_{k+1}\|^2] &\leq (1-\mu_{F}\alpha_{k})\Eset[\|\xhat_{k}\|^2]\notag\\
&\quad + 8(1+B)^4\Big(\frac{16B^2\beta_{k}^2}{\mu_{F}\alpha_{k}}  + 4B^2\beta_{k}^2 + (45B+ 6)\alpha_{k;\tau(\alpha_{k})}\alpha_{k}\Big)\|\zhat_{k}\|^2\notag\\ 
&\quad + 8(1+B)^{4}(\|y^{\star}\| + \|H(0)\| + 1)^2\Big(\frac{16B^2\beta_{k}^2}{\mu_{F}\alpha_{k}}  + 4B^2\beta_{k}^2 + (39B + 3)\alpha_{k}\alpha_{k;\tau(\alpha_{k})}\Big)\notag\\  
&\quad + B^2\Big(\alpha_{k}^2 + 8B^2\beta_{k}^2\Big)\Eset[\|\xhat_{k}\|^2] + 8B^{4}(B+1)^2\beta_{k}^2\Eset[\|\yhat_{k}\|^2] \notag\\
&\quad   +  \frac{16B^4(B+1)^2\beta_{k}^2}{\mu_{F}\alpha_{k}}\Eset[ \|\zhat_{k}\|^2]\notag\\
&\leq (1-\mu_{F}\alpha_{k})\Eset[\|\xhat_{k}\|^2]\notag\\
&\quad + 8(1+B)^4\Big(\frac{18B^2\beta_{k}^2}{\mu_{F}\alpha_{k}}  + 4B^2\beta_{k}^2 + (45B+ 6)\alpha_{k;\tau(\alpha_{k})}\alpha_{k}\Big)\|\zhat_{k}\|^2\notag\\ 
&\quad + 8(1+B)^{4}(\|y^{\star}\| + \|H(0)\| + 1)^2\Big(\frac{16B^2\beta_{k}^2}{\mu_{F}\alpha_{k}}  + 4B^2\beta_{k}^2 + (39B + 3)\alpha_{k}\alpha_{k;\tau(\alpha_{k})}\Big)\notag\\  
&\quad + B^2\alpha_{k}^2\Eset[\|\zhat_{k}\|^2] + 8B^{4}(B+1)^2\beta_{k}^2\Eset[\|\zhat_{k}\|^2]\notag\\
&\leq (1-\mu_{F}\alpha_{k})\Eset[\|\xhat_{k}\|^2] + 32(1+B)^{6}\Big(\frac{5\beta_{k}^2}{\mu_{F}\alpha_{k}}  + \beta_{k}^2 + \alpha_{k;\tau(\alpha_{k})}\alpha_{k}\Big)\|\zhat_{k}\|^2\notag\\ 
&\quad + 32(1+B)^{6}(\|y^{\star}\| + \|H(0)\| + 1)^2\Big(\frac{5\beta_{k}^2}{\mu_{F}\alpha_{k}}  + \beta_{k}^2 + \alpha_{k}\alpha_{k;\tau(\alpha_{k})}\Big),
\end{align*}
which concludes our proof. 
\end{proof}
\subsection{Proof of Lemma \ref{lem:yhat}}
\begin{proof}
Using \eqref{alg:xy} we consider
\begin{align*}
\yhat_{k+1} &= y_{k+1} - y^{\star} = \yhat_{k} - \beta_{k}G(x_{k},y_{k})  - \beta_{k}\zeta_{k} \notag\\
&= \yhat_{k} - \beta_{k}G(H(y_{k}),y_{k})- \beta_{k}\zeta_{k}  + \beta_{k} \left(G(H(y_{k}),y_{k}) - G(x_{k},y_{k})\right)  ,
\end{align*}
which implies that
\begin{align}
\|\yhat_{k+1}\|^2 &= \left\|\yhat_{k}- \beta_{k}G(H(y_{k}),y_{k})\right\|^2 - 2\beta_{k}^2\|G(H(y_{k}),y_{k})\|^2 -2\beta_{k}\yhat_{k}^T\zeta_{k} + 2\beta_{k}^2 G(H(y_{k}),y_{k})^T\zeta_{k} \notag\\
&\quad + 2\beta_{k}\yhat_{k}^T\left(G(H(y_{k}),y_{k}) - G(x_{k},y_{k})\right) +2 \beta_{k}^2G(H(y_{k}),y_{k})^TG(x_{k},y_{k})\notag\\
&\quad + \left\|\beta_{k} \left(G(H(y_{k}),y_{k}) - G(x_{k},y_{k})\right)  - \beta_{k}\zeta_{k}\right\|^2. \label{lem:yhat:Eq1}
\end{align}
We next analyze each term on the right-hand side of \eqref{lem:yhat:Eq1}. First, using ${G(H(y^{\star}),y^{\star}) = 0}$, \eqref{assump:G:sm}, \eqref{assump:smooth:FH:ineqH}, and \eqref{assump:G:smooth}  we consider 
\begin{align}
&\left\|\yhat - \beta_{k}G(H(y_{k}),y_{k})\right\|^2- 2\beta_{k}^2\|G(H(y_{k}),y_{k})\|^2  \leq \left\|\yhat_{k}\right\|^2 - 2\beta_{k}\yhat_{k}^TG(H(y_{k}),y_{k})\notag\\ 
&\quad\stackrel{\eqref{assump:G:sm}}{\leq} \|\yhat_{k}\|^2 - 2\mu_{G}\beta_{k}\|\yhat_{k}\|^2 = (1- 2\mu_{G}\beta_{k})\|\yhat_{k}\|^2.\label{lem:yhat:Eq1a}
\end{align}
Second, using \eqref{assump:G:smooth},  \eqref{lem:FG_bounded:ineq}, and $G(H(y^{\star}),y^{\star}) = 0$ we consider
\begin{align}
2\beta_{k}^2G(H(y_{k}),y_{k})^T\zeta_{k} &\leq \beta_{k}^2\|G(H(y_{k}),y_{k})\|^2 + \beta_{k}^2\|\zeta_{k}\|^2\notag\\ 
&= \beta_{k}^2\|G(H(y_{k}),y_{k}) - G(H(y^{\star}),y^{\star})\|^2 + \beta_{k}^2\|\zeta_{k}\|^2\notag\\ 
& \leq B^2\beta_{k}^2(\|H(y_{k})-H(y^{\star})\| + \|y_{k}-y^{\star}\|)^2 + \beta_{k}^2 \|\zeta_{k}\|^2\notag\\ 
&\leq B^2(B+1)^2\beta_{k}^2\|\yhat_{k}\|^2 + \beta_{k}^2\|\zeta_{k}\|^2.
\label{lem:yhat:Eq1b}
\end{align}
Third, using \eqref{assump:G:smooth}, \eqref{lem:FG_bounded:B}, and the relation $2ab\leq a^2/\eta + \eta b^2$ for all $\eta > 0$, we obtain
\begin{align}
2\beta_{k}\yhat_{k}^T\left(G(H(y_{k}),y_{k}) - G(x_{k},y_{k})\right) \leq 2B\beta_{k}\|\yhat_{k}\|\|\xhat_{k}\| \leq \mu_{G}\beta_{k}\|\yhat_{k}\|^2 + \frac{B^2}{\mu_{G}}\beta_{k}\|\xhat_{k}\|^2, \label{lem:yhat:Eq1c}
\end{align}
Next, using \eqref{assump:G:smooth}, \eqref{lem:FG_bounded:ineq}, and $G(H(y^{\star}),y^{\star}) = 0$ we have
\begin{align}
&2\beta_{k}^2 G(H(y_{k}),y_{k})^TG(x_{k},y_{k}) = 2\beta_{k}^2 \Big(G(H(y_{k}),y_{k}) - G(H(y^{\star}),y^{\star}) \Big)^T\Big(G(x_{k},y_{k}) - G(H(y^{\star}),y^{\star}) \Big)\notag\\ 
&\quad \leq 2B^2\beta_{k}^2\Big(\|H(y_{k})-H(y^{\star})\| + \|y_{k}-y^{\star}\|\Big)\|x_{k} - H(y^{\star})\| \notag\\
&\quad \leq 2B^2(B+1)\beta_{k}^2\|\yhat_{k}\|\Big(\|x_{k} - H(y_{k})\| + \|H(y_{k}) - H(y^{\star})\|\Big)\notag\\
&\quad \leq 2B^3(B+1)\beta_{k}^2\|\yhat_{k}\|^2 + 2B^2(B+1)\beta_{k}^2\|\yhat_{k}\|\|\xhat_{k}\|\notag\\
&\quad \leq 3B^3(B+1)\beta_{k}^2\|\yhat_{k}\|^2 + B(B+1)\beta_{k}^2\|\xhat_{k}\|^2\label{lem:yhat:Eq1d}
\end{align}
where the last inequality is due to the relation $2ab\leq a^2/\eta + \eta b^2$ for all $\eta > 0$. Finally, using \eqref{assump:G:smooth} and \eqref{lem:FG_bounded:B} we have
\begin{align}
\|\beta_{k} \left(G(H(y_{k}),y_{k}) - G(x_{k},y_{k})\right)  - \beta_{k}\zeta_{k}\|^2 & \leq 2\beta_{k}^2 \left\| G(H(y_{k}),y_{k}) - G(x_{k},y_{k})\right\|^2 + 2 \beta_{k}^2 \|\zeta_{k}\|^2\notag\\
&\leq 2B^2\beta_{k}^2\|\xhat_{k}\|^2 + 2 \beta_{k}^2 \|\zeta_{k}\|^2. 
\label{lem:yhat:Eq1e}
\end{align}
Taking the expectation of \eqref{lem:yhat:Eq1} and using \eqref{lem:yhat:Eq1a}--\eqref{lem:yhat:Eq1e}  yields 
\begin{align}
\Eset[\|\yhat_{k+1}\|^2] & \leq (1- 2\mu_{G}\beta_{k})\Eset[\|\yhat_{k}\|^2]   -2\beta_{k}\Eset[\yhat_{k}^T\zeta_{k}] +  B^2(B+1)^2\beta_{k}^2\Eset[\|\yhat_{k}\|^2] + \beta_{k}^2\Eset[\|\zeta_{k}\|^2]\notag\\
&\quad + \mu_{G}\beta_{k}\Eset[\|\yhat_{k}\|^2] + \frac{B^2}{\mu_{G}}\beta_{k}\Eset[\|\xhat_{k}\|^2] + 3B^3(B+1)\beta_{k}^2\Eset[\|\yhat_{k}\|^2] + B(B+1)\beta_{k}^2\Eset[\|\xhat_{k}\|^2]\notag\\
&\quad + 2B^2\beta_{k}^2\Eset[\|\xhat_{k}\|^2] + 2 \beta_{k}^2 \Eset[\|\zeta_{k}\|^2]\notag\\
&\leq (1- \mu_{G}\beta_{k})\Eset[\|\yhat_{k}\|^2]   - 2\beta_{k}\Eset[\yhat_{k}^T\zeta_{k}]  + 3 \beta_{k}^2 \Eset[\|\zeta_{k}\|^2]\notag\\
&\quad + \frac{B^2}{\mu_{G}}\beta_{k}\Eset[\|\xhat_{k}\|^2] + 3(B+1)^2\beta_{k}^2\Eset[\|\xhat_{k}\|^2]  + 4(B+1)^{4}\beta_{k}^2\Eset[\|\yhat_{k}\|^2].\label{lem:yhat:Eq2}
\end{align}
By using \eqref{lem:yhat_bias:ineq} and \eqref{lem:psi_zeta_bound:ineq} we consider
\begin{align*}
&- 2\beta_{k}\Eset[\yhat_{k}^T\zeta_{k}]  + 3 \beta_{k}^2 \Eset[\|\zeta_{k}\|^2]\notag\\
&\quad \leq 18(1+B)^{4}\alpha_{k}\beta_{k}\Eset[\|\zhat_{k}\|^2] + 180B^2(1+B)^{3}\alpha_{k;\tau(\alpha_{k})}\beta_{k}\Eset[\|\zhat_{k}\|^2] \notag\\
&\quad\quad  + 156B(1+B)^{4}(\|y^{\star}\| + \|H(0)\| + 1)^2 \alpha_{k;\tau(\alpha_{k})}\beta_{k}\notag\\
&\quad\quad +  24B^2(1+B)^2\beta_{k}^2\Eset[\|\zhat_{k}\|^2] + 24B^2(1+B)^2(\|y^{\star}\| + \|H(0)\| + 1)^2\beta_{k}^2\notag\\
&\quad\leq 18(1+B)^{4}(\alpha_{k}\beta_{k} + 10B\alpha_{k;\tau(\alpha_{k})}\beta_{k} + 2\beta_{k}^2)\Eset[\|\zhat_{k}\|^2] \notag\\
&\quad\quad + 24(1+B)^{4}(\|y^{\star}\| + \|H(0)\| + 1)^2(\beta_{k}^2 + 7B\alpha_{k;\tau(\alpha_{k})}\beta_{k}).
\end{align*}
Substituting the preceding relation into \eqref{lem:yhat:Eq2} yields
\begin{align*}
\Eset[\|\yhat_{k+1}\|^2]
&\leq (1- \mu_{G}\beta_{k})\Eset[\|\yhat_{k}\|^2]   + 18(1+B)^{4}(\alpha_{k}\beta_{k} + 10B\alpha_{k;\tau(\alpha_{k})}\beta_{k} + 2\beta_{k}^2)\Eset[\|\zhat_{k}\|^2] \notag\\
&\quad + 24(1+B)^{4}(\|y^{\star}\| + \|H(0)\| + 1)^2(\beta_{k}^2 + 7B\alpha_{k;\tau(\alpha_{k})}\beta_{k})\notag\\
&\quad + \frac{B^2}{\mu_{G}}\beta_{k}\Eset[\|\xhat_{k}\|^2] + 3(B+1)^2\beta_{k}^2\Eset[\|\xhat_{k}\|^2]  + 4(B+1)^{4}\beta_{k}^2\Eset[\|\yhat_{k}\|^2] \notag\\
&\leq (1- \mu_{G}\beta_{k})\Eset[\|\yhat_{k}\|^2]   + 18(1+B)^{4}(\alpha_{k}\beta_{k} + 10B\alpha_{k;\tau(\alpha_{k})}\beta_{k} + 3\beta_{k}^2)\Eset[\|\zhat_{k}\|^2] \notag\\
&\quad + 24(1+B)^{4}(\|y^{\star}\| + \|H(0)\| + 1)^2(\beta_{k}^2 + 7B\alpha_{k;\tau(\alpha_{k})}\beta_{k}) + \frac{B^2}{\mu_{G}}\beta_{k}\Eset[\|\xhat_{k}\|^2],
\end{align*}
which concludes our proof. 
\end{proof}
\subsection{Proof of Lemma \ref{lem:zhat:upperbound}}
\begin{proof}
Adding \eqref{lem:xhat:ineq} to \eqref{lem:yhat:ineq} and using $\beta_{k}\leq \alpha_{k} \leq \alpha_{k;\tau(\alpha_{k})}$ we obtain
\begin{align}
\Eset[\|\zhat_{k+1}\|^2] &\leq (1-\mu_{F}\alpha_{k})\Eset[\|\xhat_{k}\|^2] + 32(1+B)^{6}\Big(\frac{5\beta_{k}^2}{\mu_{F}\alpha_{k}}  + \beta_{k}^2 + \alpha_{k;\tau(\alpha_{k})}\alpha_{k}\Big)\|\zhat_{k}\|^2\notag\\ 
&\quad + 32(1+B)^{6}(\|y^{\star}\| + \|H(0)\| + 1)^2\Big(\frac{5\beta_{k}^2}{\mu_{F}\alpha_{k}}  + \beta_{k}^2 + \alpha_{k}\alpha_{k;\tau(\alpha_{k})}\Big)\notag\\ 
&\quad + (1- \mu_{G}\beta_{k})\Eset[\|\yhat_{k}\|^2]   + 18(1+B)^{4}(\alpha_{k}\beta_{k} + 10B\alpha_{k;\tau(\alpha_{k})}\beta_{k} + 3\beta_{k}^2)\Eset[\|\zhat_{k}\|^2] \notag\\
&\quad + 24(1+B)^{4}(\|y^{\star}\| + \|H(0)\| + 1)^2(\beta_{k}^2 + 7B\alpha_{k;\tau(\alpha_{k})}\beta_{k}) + \frac{B^2}{\mu_{G}}\beta_{k}\Eset[\|\xhat_{k}\|^2]\notag\\
&\leq \Eset[\|\zhat_{k}\|^2] + 160(1+B)^{6}\Big(\frac{\beta_{k}^2}{\mu_{F}\alpha_{k}}  + \beta_{k}^2 + \alpha_{k;\tau(\alpha_{k})}\alpha_{k}\Big)\|\zhat_{k}\|^2\notag\\
&\quad + 160(1+B)^{6}(\|y^{\star}\| + \|H(0)\| + 1)^2\Big(\frac{\beta_{k}^2}{\mu_{F}\alpha_{k}}  + \beta_{k}^2 + \alpha_{k}\alpha_{k;\tau(\alpha_{k})}\Big),  \label{lem:zhat:upperbound:Eq1}
\end{align}
where the last inequality we use \eqref{thm:main:stepsize} to have 
\begin{align*}
-\mu_{F}\alpha_{k} + \frac{B^2}{\mu_{G}}\beta_{k} \leq 0.   
\end{align*}
Let $w_{k}$ satisfy $w_{0} = 1$ and
\begin{align}
w_{k} = \prod_{t=0}^{k}\left(1+160(B+1)^{6}\Big(\frac{\beta_{t}^2}{\mu_{F}\alpha_{t}} + \beta_{t}^2 + \alpha_{t}\alpha_{t;\tau(\alpha_{t})}\Big)\right),\label{notation:wk}   
\end{align}
On the one hand, using $(1+x)\leq e^{x}$ for all $x\geq 0$ and \eqref{notation:CD} we have
\begin{align}
w_{k} &\leq e^{160(B+1)^{6}\sum_{t=0}^{k}\left(\frac{\beta_{t}^2}{\mu_{F}\alpha_{t}} + \beta_{t}^2 + \alpha_{t}\alpha_{t;\tau(\alpha_{t})}\right)}\leq e^{160D_{1}(B+1)^{6}}.  \label{lem:zhat:upperbound:wk_upperbound}
\end{align}
On the other hand, using $1+x\geq e^{-x}$ for all $x\geq 0$ and \eqref{notation:CD} we obtain
\begin{align}
w_{k} &\geq e^{160(B+1)^{6}\sum_{t=0}^{k}\left(\frac{\beta_{t}^2}{\mu_{F}\alpha_{t}} + \beta_{t}^2 + \alpha_{t}\alpha_{t;\tau(\alpha_{t})}\right)}\leq e^{-160D_{1}(B+1)^{6}}.  \label{lem:zhat:upperbound:wk_lowerbound}
\end{align}
Thus, dividing both sides of \eqref{lem:zhat:upperbound:Eq1} by $w_{k+1}$ and using \eqref{lem:zhat:upperbound:wk_lowerbound} and \eqref{notation:CD} give
\begin{align*}
\frac{\Eset\left[\|\zhat_{k+1}\|^2\right]}{w_{k+1}} &\leq \frac{\Eset\left[\|\zhat_{k}\|^2\right]}{w_{k}} + \frac{D_{2}}{w_{k+1}}\Big(\frac{\beta_{k}^2}{\mu_{F}\alpha_{k}} + \beta_{k}^2 + 2\alpha_{k}\alpha_{k;\tau(\alpha_{k})}\Big)\notag\\
&\leq \Eset[\|\zhat_{0}\|^2]+ D_{2}e^{160D_{1}(B+1)^{6}}\sum_{t=0}^{k}\Big(\frac{\beta_{t}^2}{\mu_{F}\alpha_{t}} + \beta_{t}^2 + 2\alpha_{t}\alpha_{t;\tau(\alpha_{t})}\Big)\notag\\
&\leq \Eset[\|\zhat_{0}\|^2]+ D_{1}D_{2}e^{160D_{1}(B+1)^{6}},
\end{align*}
which by using Eq.\ \eqref{lem:zhat:upperbound:wk_lowerbound} immediately gives Eq.\ \eqref{lem:zhat:upperbound:ineq}.

\end{proof}


\newpage

\bibliographystyle{ieeetr}
\bibliography{refs}


\appendix

\section{Proof of Lemma \ref{lem:xhat_bias}}
\begin{proof}
We consider 
\begin{align}
-\xhat_{k}^T\psi_{k}  &= -\xhat_{k}^T(\psi_{k}-\psi_{k-\tau(\alpha_{k})}) - \xhat_{k}^T\psi_{k-\tau(\alpha_{k})}\notag\\
&= -\xhat_{k}^T(\psi_{k}-\psi_{k-\tau(\alpha_{k})}) - (\xhat_{k}-\xhat_{k-\tau(\alpha_{k})})^T\psi_{k-\tau(\alpha_{k})}  - \xhat_{k-\tau(\alpha_{k})}^T\psi_{k-\tau(\alpha_{k})}. \label{lem:xhat_bias:eq1}
\end{align}
We next analyze each term on the right-hand side of \eqref{lem:xhat_bias:eq1}. First, using \eqref{assump:mixing:tau} and \eqref{notation:psi_zeta} we have $\forall k\geq\Kcal^{\star}$
\begin{align}
&\Eset[-\xhat_{k-\tau(\alpha_{k})}^T\psi_{k-\tau(\alpha_{k})}\;|\;\Qcal_{k-\tau(\alpha_{k})}] = -\xhat_{k-\tau(\alpha_{k})}^T\Eset[\psi_{k-\tau(\alpha_{k})}\;|\;\Qcal_{k-\tau(\alpha_{k})}]\notag\\
&\quad \leq \|\xhat_{k-\tau(\alpha_{k})}\|\|\Eset[\psi_{k-\tau(\alpha_{k})}\;|\;\Qcal_{k-\tau(\alpha_{k})}]\|\stackrel{\eqref{assump:mixing:ineq}}{\leq} \alpha_{k}\|\xhat_{k-\tau(\alpha_{k})}\| \leq \alpha_{k}\|\zhat_{k-\tau(\alpha_{k})}\| \notag\\
&\quad \leq \alpha_{k}\|\zhat_{k}\| + \alpha_{k}\|\zhat_{k} - \zhat_{k-\tau(\alpha_{k})}\| \leq \alpha_{k}\|\zhat_{k}\|^2 + \alpha_{k}\|\zhat_{k} - \zhat_{k-\tau(\alpha_{k})}\|^2 + 2\alpha_{k}\notag\\ 
&\quad \leq 9(1+B)^{4}\alpha_{k}\|\zhat_{k}\|^2 +  10(1+B)^{4}\big(\|y^{\star}\| + \|H(0)\|+1\big)^2\alpha_{k} ,\label{lem:xhat_bias:eq1a}
\end{align}
where the last inequality we use \eqref{lem:zhat_bound:ineq3} and \eqref{notation:K*} (i.e., $2B\alpha_{k;\tau(\alpha_{k})} \leq \log(2) \leq 1/3)$ to have for all $k\geq \Kcal^*$
\begin{align*}
\|\zhat_{k}-\zhat_{k-\tau(\alpha_{k})}\|^2 &\leq 288B^2(1+B)^4\alpha_{k;\tau(\alpha_{k})}^2\big(\|\zhat_{k}\|^2 + (\|y^{\star}\| + \|H(0)\|+1)^2\big)\notag\\ 
&\leq 8(1+B)^4\big(\|\zhat_{k}\|^2 + (\|y^{\star}\| + \|H(0)\|+1)^2\big).
\end{align*}
Second, using \eqref{assump:smooth:FH:ineqF}, \eqref{assump:smooth:samples:ineq}, and \eqref{lem:FG_bounded:B} we consider 
\begin{align}
&-\xhat_{k}^T(\psi_{k}-\psi_{k-\tau(\alpha_{k})})\leq \|\xhat_{k}\|\|\psi_{k}-\psi_{k-\tau(\alpha_{k})})\|\leq 2B\|\xhat_{k}\|\|z_{k} -  z_{k-\tau(\alpha_{k})}\|\notag\\  
&\quad \leq 2B\|\zhat_{k}\|\|z_{k} -  z_{k-\tau(\alpha_{k})}\| \stackrel{\eqref{lem:z_bound:ineq2}}{\leq} 24B^2\alpha_{k;\tau(\alpha_{k})}\|\zhat_{k}\|(\|z_{k}\|+1)\notag\\
&\quad \stackrel{\eqref{lem:zhat_bound:eq1}}{\leq}24B^2(1+B)\alpha_{k;\tau(\alpha_{k})}\|\zhat_{k}\|^2 + 24B^2(1+B)\alpha_{k;\tau(\alpha_{k})}\|\zhat_{k}\| (\|y^{\star}\| + \|H(0)\| + 1)\notag\\
&\quad \leq 36B^2(1+B)\alpha_{k;\tau(\alpha_{k})}\|\zhat_{k}\|^2 + 12B^2(1+B)\alpha_{k;\tau(\alpha_{k})}(\|y^{\star}\| + \|H(0)\| + 1)^2,\label{lem:xhat_bias:eq1b}
\end{align}
where the last inequality we use the Cauchy-Schwarz inequality. Third, by using \eqref{lem:z_bound:ineq2} and \eqref{notation:K*} we have $\forall k\geq\Kcal^{\star}$
\begin{align*}
&\|z_{k-\tau(\alpha_{k})}\| \leq \|z_{k} - z_{k-\tau(\alpha_{k})}\| + \|z_{k}\| \stackrel{\eqref{lem:z_bound:ineq2}}{\leq} 12B\alpha_{k;\tau(\alpha_{k})}(\|z_{k}\| + 1) + \|z_{k}\|\stackrel{\eqref{notation:K*}}{\leq} 3\|z_{k}\| + 2.
\end{align*} 
Moreover, by using \eqref{lem:FG_bounded:ineq} and \eqref{notation:psi_zeta} we have
\begin{align*}
\|\psi_{k-\tau(\alpha_{k})}\| &= \|F(x_{k-\tau(\alpha_{k})},y_{k-\tau(\alpha_{k})};\xi_{k-\tau(\alpha_{k})}) - F(x_{k-\tau(\alpha_{k})}, y_{k-\tau(\alpha_{k})})\| \leq 2B\big(\|\zhat_{k-\tau(\alpha_{k})}\| + 1\big) 
\end{align*}
Using the preceding two relations, and \eqref{lem:zhat_bound:ineq2}, we have
\begin{align}
&-(\xhat_{k}-\xhat_{k-\tau(\alpha_{k})})^T\psi_{k-\tau(\alpha_{k})}\leq \|\xhat_{k}-\xhat_{k-\tau(\alpha_{k})}\|\|\psi_{k-\tau(\alpha_{k})}\|\notag\\
&\quad \stackrel{\eqref{lem:FG_bounded:ineq}}{\leq} 2B \|\xhat_{k}-\xhat_{k-\tau(\alpha_{k})}\|(\|z_{k-\tau(\alpha_{k})}\| + 1) \leq 6B \|\xhat_{k}-\xhat_{k-\tau(\alpha_{k})}\|(\|z_{k}\| + 1)\notag\\
&\quad  \stackrel{\eqref{lem:zhat_bound:eq1}}{\leq}6B(1+B) \|\xhat_{k}-\xhat_{k-\tau(\alpha_{k})}\|(\|\zhat_{k}\| + \|y^{\star}\| + \|H(0)\| + 1)\notag\\
&\quad \stackrel{\eqref{lem:zhat_bound:ineq2}}{\leq} 72B^2(1+B)^{3}\alpha_{k;\tau(\alpha_{k})}(\|\zhat_{k}\| + \|y^{\star}\| + \|H(0)\| + 1)^2\notag\\
&\quad \leq 144B^2(1+B)^{3}\alpha_{k;\tau(\alpha_{k})}\|\zhat_{k}\|^2  + 144B^2(1+B)^{3}\alpha_{k;\tau(\alpha_{k})}(\|y^{\star}\| + \|H(0)\| + 1)^2.\label{lem:xhat_bias:eq1c} 
\end{align}
Thus, taking the expectation on both sides of \eqref{lem:xhat_bias:eq1} and using \eqref{lem:xhat_bias:eq1a}--\eqref{lem:xhat_bias:eq1c} we obtain \eqref{lem:xhat_bias:ineq}, i.e., 
\begin{align*}
\Eset[-\xhat_{k}^T\psi_{k}]
&= 9(1+B)^{4}\alpha_{k}\Eset[\|\zhat_{k}\|^2] +  10(1+B)^{4}\big(\|y^{\star}\| + \|H(0)\|+1\big)^2\alpha_{k}\notag\\ 
&\quad + 36B^2(1+B)\alpha_{k;\tau(\alpha_{k})}\Eset[\|\zhat_{k}\|^2] + 12B^2(1+B)\alpha_{k;\tau(\alpha_{k})}(\|y^{\star}\| + \|H(0)\| + 1)^2\notag\\
&\quad + 144B^2(1+B)^{3}\alpha_{k;\tau(\alpha_{k})}\Eset[\|\zhat_{k}\|^2]  + 144B^2(1+B)^{3}\alpha_{k;\tau(\alpha_{k})}(\|y^{\star}\| + \|H(0)\| + 1)^2\notag\\  
&\leq 9(1+B)^{4}\alpha_{k}\Eset[\|\zhat_{k}\|^2] + 180B^2(1+B)^{3}\alpha_{k;\tau(\alpha_{k})}\Eset[\|\zhat_{k}\|^2] \notag\\
&\quad + 156B(1+B)^{4}\alpha_{k;\tau(\alpha_{k})}(\|y^{\star}\| + \|H(0)\| + 1)^2,
\end{align*}
where in the last inequality we use $\alpha_{k} \leq \alpha_{k;\tau(\alpha_{k})}$.
\end{proof}

\section{Proof of Lemma \ref{lem:yhat_bias}}

\begin{proof}
We consider 
\begin{align}
-\yhat_{k}^T\zeta_{k}  &= -\yhat_{k}^T(\zeta_{k}-\zeta_{k-\tau(\alpha_{k})}) - \yhat_{k}^T\zeta_{k-\tau(\alpha_{k})}\notag\\
&= -\yhat_{k}^T(\zeta_{k}-\zeta_{k-\tau(\alpha_{k})}) - (\yhat_{k}-\yhat_{k-\tau(\alpha_{k})})^T\zeta_{k-\tau(\alpha_{k})} - \yhat_{k-\tau(\alpha_{k})}^T\zeta_{k-\tau(\alpha_{k})}. \label{lem:yhat_bias:eq1}
\end{align}
We next analyze each term on the right-hand side of \eqref{lem:xhat_bias:eq1}. First, using \eqref{assump:mixing:ineq} and \eqref{lem:zhat_bound:ineq2} we have $\forall k\geq\Kcal^{\star}$
\begin{align}
&\Eset[-\yhat_{k-\tau(\alpha_{k})}^T\zeta_{k-\tau(\alpha_{k})}\;|\;\Qcal_{k-\tau(\alpha_{k})}] = -\yhat_{k-\tau(\alpha_{k})}^T\Eset[\zeta_{k-\tau(\alpha_{k})}\;|\;\Qcal_{k-\tau(\alpha_{k})}]\notag\\
&\quad\leq \|\yhat_{k-\tau(\alpha_{k})}\|\|\Eset[\zeta_{k-\tau(\alpha_{k})}\;|\;\Qcal_{k-\tau(\alpha_{k})}]\stackrel{\eqref{assump:mixing:ineq}}{\leq} \alpha_{k}\|\yhat_{k-\tau(\alpha_{k})}\|\leq \alpha_{k}\|\zhat_{k-\tau(\alpha_{k})}\| \notag\\
&\quad \leq \alpha_{k}\|\zhat_{k}\| + \alpha_{k}\|\zhat_{k} - \zhat_{k-\tau(\alpha_{k})}\| \leq \alpha_{k}\|\zhat_{k}\|^2 + \alpha_{k}\|\zhat_{k} - \zhat_{k-\tau(\alpha_{k})}\|^2 + 2\alpha_{k}\notag\\ 
&\quad \leq 9(1+B)^{4}\alpha_{k}\|\zhat_{k}\|^2 +  10(1+B)^{4}\big(\|y^{\star}\| + \|H(0)\|+1\big)^2\alpha_{k} ,\label{lem:yhat_bias:eq1a}
\end{align}
where the last inequality we use \eqref{lem:zhat_bound:ineq3} and \eqref{notation:K*} (i.e., $2B\alpha_{k;\tau(\alpha_{k})} \leq \log(2) \leq 1/3)$ to have for all $k\geq \Kcal^*$
\begin{align*}
\|\zhat_{k}-\zhat_{k-\tau(\alpha_{k})}\|^2 &\leq 288B^2(1+B)^4\alpha_{k;\tau(\alpha_{k})}^2\big(\|\zhat_{k}\|^2 + (\|y^{\star}\| + \|H(0)\|+1)^2\big)\notag\\ 
&\leq 8(1+B)^4\big(\|\zhat_{k}\|^2 + (\|y^{\star}\| + \|H(0)\|+1)^2\big).
\end{align*}
Second, similar to \eqref{lem:xhat_bias:eq1b} we obtain 
\begin{align}
&-\yhat_{k}^T(\zeta_{k}-\zeta_{k-\tau(\alpha_{k})})\leq \|\yhat_{k}\|\|\zeta_{k}-\zeta_{k-\tau(\alpha_{k})})\|\leq 2B\|\yhat_{k}\|\|z_{k} -  z_{k-\tau(\alpha_{k})}\| \leq 2B\|\zhat_{k}\|\|z_{k} -  z_{k-\tau(\alpha_{k})}\notag\\
&\quad\leq 36B^2(1+B)\alpha_{k;\tau(\alpha_{k})}\|\zhat_{k}\|^2 + 12B^2(1+B)\alpha_{k;\tau(\alpha_{k})}(\|y^{\star}\| + \|H(0)\| + 1)^2.\label{lem:yhat_bias:eq1b}
\end{align}
Third, using the same line of analysis as in \eqref{lem:xhat_bias:eq1c} we have
\begin{align}
&-(\yhat_{k}-\yhat_{k-\tau(\alpha_{k})})^T\zeta_{k-\tau(\alpha_{k})}\leq \|\yhat_{k}-\yhat_{k-\tau(\alpha_{k})}\|\|\zeta_{k-\tau(\alpha_{k})}\|\stackrel{\eqref{lem:FG_bounded:ineq}}{\leq} 2B \|\yhat_{k}-\yhat_{k-\tau(\alpha_{k})}\|(\|z_{k-\tau(\alpha_{k})}\| + 1)\notag\\
&\quad\leq 144B^2(1+B)^{3}\alpha_{k;\tau(\alpha_{k})}\|\zhat_{k}\|^2 + 144B^2(1+B)^{3}\alpha_{k;\tau(\alpha_{k})}(\|y^{\star}\| + \|H(0)\| + 1)^2.\label{lem:yhat_bias:eq1c} 
\end{align}
Thus, taking the expectation on both sides of \eqref{lem:yhat_bias:eq1} and using \eqref{lem:yhat_bias:eq1a}--\eqref{lem:yhat_bias:eq1c} we obtain \eqref{lem:yhat_bias:ineq}. 
\end{proof}
\section{Proof of Lemma \ref{lem:z_bound}}

\begin{proof}
By \eqref{alg:xy} and since $\beta_{k}\leq \alpha_{k}$ we have for all $k\geq \Kcal^{\star}$
\begin{align*}
\|z_{k+1}\| &\leq \|z_{k}\| + \alpha_{k}\left(\|F(x_{k},y_{k};\xi_{k})\| + \frac{\beta_{k}}{\alpha_{k}}\|G(x_{k},y_{k};\xi_{k})\| \right) \stackrel{\eqref{lem:FG_bounded:ineq}}{\leq} (1+2B\alpha_{k})\|z_{k}\| + 2B\alpha_{k},
\end{align*}
which by using the relation $1+x\leq exp(x)$ for all $x\geq 0$ yields for all $k\geq \Kcal^{\star}$ and  $ t\in[k-\tau(\alpha_{k}),k]$
\begin{align*}
\|z_{t+1}\| &\leq \prod_{\ell=k-\tau(\alpha_{k})}^{t}(1+2B\alpha_{\ell})\|z_{k-\tau(\alpha_{k})}\| + 2B \sum_{\ell=k-\tau(\alpha_{k})}^{t}\alpha_{t}\prod_{u=\ell+1}^{t}(1+2B\alpha_{u})\notag\\
&\leq \|z_{k-\tau(\alpha_{k})}\|\exp\big(2B\sum_{\ell = k-\tau(\alpha_{k}}^{t}\alpha_{\ell}\big) + 2B\sum_{\ell=k-\tau(\alpha_{k})}^{t}\alpha_{t}\exp\big(2B\sum_{u = \ell+1}^{t}\alpha_{u}\big)\notag\\
&\leq 2\|z_{k-\tau(\alpha_{k})}\| + 4B\alpha_{t;\tau(\alpha_{k})},
\end{align*}
where the last inequality is due to
\begin{align*}
\sum_{t=k-\tau(\alpha_{k})}^{k}\alpha_{t} \leq \tau(\alpha_{k})\alpha_{k-\tau(\alpha_{k})} \leq \frac{\log(2)}{2B},\quad \forall k \geq\Kcal^{\star}.     
\end{align*}
Using the relation above we obtain \eqref{lem:z_bound:ineq1}, i.e.,
\begin{align*}
\|z_{k} - z_{k-\tau(\alpha_{k})}\| &\leq \sum_{t=k-\tau(\alpha_{k})}^{k-1}\|z_{t+1}-z_{t}\| \leq \sum_{t=k-\tau(\alpha_{k})}^{k-1}2B\alpha_{t}(\|z_{t}\|+1)\notag\\
&\leq \sum_{t=k-\tau(\alpha_{k})}^{k-1}2B\alpha_{t}(2\|z_{k-\tau(\alpha_{k})}\| + 4B\alpha_{t;\tau(\alpha_{k})} + 1)\notag\\
&\leq 4B\alpha_{k;\tau(\alpha_{k})}\|z_{k-\tau(\alpha_{k})}\| + 4B\alpha_{k;\tau(\alpha_{k})},
\end{align*}
where in the last inequality we use $4B\alpha_{k;\tau(\alpha_{k})} \leq 2\log(2) \leq 1$ for all $k\geq \Kcal^{\star}$. Finally, using the triangle inequality the preceding relation yields 
\begin{align*}
\|z_{k} - z_{k-\tau(\alpha_{k})}\| &\leq    4B\alpha_{k;\tau(\alpha_{k})}(\|z_{k} - z_{k-\tau(\alpha_{k})}\| + \|z_{k}\|) + 4B\alpha_{k;\tau(\alpha_{k})},
\end{align*}
which by rearranging both sides and using $4B\alpha_{k;\tau(\alpha_{k})}\leq 2\log(2)\leq 2/3$ gives \eqref{lem:z_bound:ineq2}. 
\end{proof}



\section{Proof of Lemma \ref{lem:zhat_bound}}

\begin{proof}
By \eqref{alg:xyhat} and \eqref{assump:smooth:FH:ineqH} we have
\begin{align*}
&\|\xhat_{k}\| = \|x_{k}-H(y_{k})\| \geq \|x_{k}\| - L\|y_{k}\| - \|H(0)\|,\notag\\
&\|\yhat_{k}\| = \|y_{k}-y^{\star}\| \geq \|y_{k}\| - \|y^{\star}\|,
\end{align*}
which by  \eqref{lem:FG_bounded:B} implies that 
\begin{align*}
&\|y_{k}\| \leq \|\yhat_{k}\| + \|y^{\star}\|,\notag\\
&\|x_{k}\| \leq \|\xhat_{k}\| + B\|\yhat_{k}\| + \|H(0)\|  + L\|y^{\star}\|.
\end{align*}
Thus, using $z = [x,y]^T$ and $\zhat = [\xhat,\yhat]^T$ we obtain $\forall k\geq0$
\begin{align}
\|z_{k}\| \leq (1+B)\|\zhat_{k}\| + (1+B)(\|y^{\star}\| + \|H(0)\|).    \label{lem:zhat_bound:eq1}
\end{align}
We now use \eqref{lem:FG_bounded:B}, \eqref{alg:xyhat} and \eqref{lem:z_bound:ineq1} to obtain \eqref{lem:zhat_bound:ineq1}, i.e., $\forall k\geq \Kcal^{\star}$
\begin{align}
& \|\zhat_{k}-\zhat_{k-\tau(\alpha_{k})}\| = \|\xhat_{k}-\xhat_{k-\tau(\alpha_{k})}\| + \|\yhat_{k}-\yhat_{k-\tau(\alpha_{k})}\|  \notag\\
&\quad =\|x_{k}-x_{k-\tau(\alpha_{k})} - H(y_{k}) + H(y_{k-\tau(\alpha_{k})})\|  + \|y_{k}-y_{k-\tau(\alpha_{k})}\| \notag\\
&\quad \leq \|x_{k}-x_{k-\tau(\alpha_{k})}\| + (B+1)\|y_{k} - y_{k-\tau(\alpha_{k})}\| \leq (1+B)\|z_{k}-z_{k-\tau(\alpha_{k})}\| \label{lem:zhat_bound:eq2a}
\\
&\quad \stackrel{\eqref{lem:z_bound:ineq1}}{\leq} 4B(1+B)\alpha_{k;\tau(\alpha_{k})}(\|z_{k-\tau(\alpha_{k})}\|+1)\notag\\
&\quad \stackrel{\eqref{lem:zhat_bound:eq1}}{\leq}4B(1+B)^2\alpha_{k;\tau(\alpha_{k})}\Big(\|\zhat_{k-\tau(\alpha_{k})}\| + (\|y^{\star}\| + \|H(0)\|+1)\Big).\notag 
\end{align}
Next, by \eqref{lem:zhat_bound:eq2a} and \eqref{lem:z_bound:ineq2} we achieve \eqref{lem:zhat_bound:ineq2}, i.e., $\forall k \geq \Kcal^{\star}$ 
\begin{align*}
&\|\zhat_{k}-\zhat_{k-\tau(\alpha_{k})}\| \leq 12B(1+L)\alpha_{k;\tau(\alpha_{k})}(\|z_{k}\|+1)\notag\\
&\quad \stackrel{\eqref{lem:zhat_bound:eq1}}{\leq} 12B(1+B)^2\alpha_{k;\tau(\alpha_{k})}\|\zhat_{k}\|  + 12B(1+B)^2\alpha_{k;\tau(\alpha_{k})}(\|y^{\star}\| + \|H(0)\|+1).
\end{align*}
Finally, using the relation $(a+b)^2 \leq 2a^2+2b^2$ and the preceding equation immediately gives \eqref{lem:zhat_bound:ineq3}. 
\end{proof}

\subsection{Proof of Lemma \ref{lem:psi_zeta_bound}}

\begin{proof}
By \eqref{lem:FG_bounded:B}, \eqref{notation:psi_zeta},  \eqref{lem:FG_bounded:ineq}, and \eqref{lem:zhat_bound:eq1} we have
\begin{align*}
&\|\psi_{k}\| \leq 2B(\|z_{k}\|+1) \stackrel{\eqref{lem:zhat_bound:eq1} }{\leq} 2B(1+B)\|\zhat_{k}\| + 2B(1+B)(\|y^{\star}\| + \|H(0)\| + 1).
\end{align*}
The proof of $\|\zeta_{k}\|$ can be shown in a similar step. 
\end{proof}



\end{document}